\documentclass{article}
\usepackage{tikz}
\usetikzlibrary{arrows.meta,bending,calc,intersections,cd}

\usepackage{amsmath}
\usepackage{amssymb}
\usepackage{fullpage}
\usepackage[all]{xy}
\usepackage{amsthm}
\usepackage{graphicx}
\usepackage{subfig}
\usepackage{enumerate}
\usepackage{tikz-cd}
\usepackage{bookmark}
\allowdisplaybreaks

\title{Gendo-Frobenius algebras and comultiplication}
\author{\c{C}i\u{g}dem Y{\i}rt{\i}c{\i} \footnote{Institute of Algebra and Number Theory, University of Stuttgart, Pfaffenwaldring 57, 70569 Stuttgart, Germany,\hspace{3cm}cigdemyirtici@gmail.com.}
}
\date{}

\theoremstyle{plain}
\newtheorem{theorem}{Theorem}[section]

\newtheorem{lemma}[theorem]{Lemma}
\newtheorem{corollary}[theorem]{Corollary}
\newtheorem{proposition}[theorem]{Proposition}
\theoremstyle{definition}
\newtheorem{definition}[theorem]{Definition}
\newtheorem{example}[theorem]{Example}
\theoremstyle{remark}
\newtheorem{remark}[theorem]{Remark}

\newtheorem*{theorem*}{Theorem}

\begin{document}
\maketitle

\begin{abstract}
Gendo-Frobenius algebras are a common generalisation of Frobenius algebras and of gendo-symmetric algebras. A comultiplication is constructed for gendo-Frobenius algebras, which specialises to the known comultiplications on Frobenius and on gendo-symmetric algebras. In addition, Frobenius algebras are shown to be precisely those gendo-Frobenius algebras that have a counit compatible with this comultiplication. Moreover, a new characterisation of gendo-Frobenius algebras is given. This new characterisation is a key for constructing the comultiplication of gendo-Frobenius algebras.\\

\scriptsize{\textbf{\emph{Keywords:}} Frobenius algebras, Morita algebras, endomorphism algebras, comultiplication. \emph{\textbf{2020 Mathematics Subject Classification:}} 16G10, 16L60, 16S50, 16T15.}
\end{abstract}
\section{Introduction}
Group algebras of finite groups have two different comultiplications, one as a Hopf algebra (see \cite{Yamagata}, Chapter VI) and another one as a symmetric algebra. The second comultiplication can be extended to Frobenius algebras, where it plays an important role in relating commutative Frobenius algebras with two-dimensional topological quantum field theories \cite{Kock}. Another generalisation of the second comultiplication can be obtained for gendo-symmetric algebras \cite{Koenig1}, which include the algebras on both sides of classical Schur-Weyl duality and of Soergel’s structure theorem for the BGG-category $\mathcal{O}$, and many other algebras of interest. The aim of this article is to extend this second comultiplication to gendo-Frobenius algebras, which include both Frobenius algebras and gendo-symmetric algebras.

Motivated by \cite{Koenig1} and \cite{Kerner}, we call a finite dimensional $k$-algebra $A$ a {\em gendo-Frobenius algebra} if it satisfies one of the following equivalent conditions:

(i) $A$ is isomorphic to the endomorphism algebra of a finite dimensional faithful right module $M$ over a Frobenius algebra $B$ such that $M\cong M_{\nu_B} $ as right $B$-modules, where $\nu_B$ is a Nakayama automorphism of $B$.

(ii) Hom$_A(\text{D}(A), A)\cong A$ as left $A$-modules.

The equivalence of the conditions (i) and (ii) has been proved by Kerner and Yamagata in \cite{Kerner}. Gendo-Frobenius algebras are Morita algebras, that is, they are isomorphic to endomorphism algebras of finite dimensional faithful modules over self-injective algebras. In the definition of Morita algebras, the condition (ii) given above is relaxed, and requires that $\text{Hom}_A({}_A \text{D}(A),{}_A A)$ is a faithful left $A$-module \cite{Kerner}. Therefore, Morita algebras are not always gendo-Frobenius. We may visualise the hierarchy of the finite dimensional algebras mentioned above as follows.

$$\begin{xy}
\xymatrixrowsep{0.1in}
\xymatrixcolsep{0.1in}
  \xymatrix{
   & &\text{\footnotesize{Group algebras of finite groups}} \ar[d] & \\
   & & \ar[dl] \text{\footnotesize{Symmetric algebras}} \ar[dr] & \\
   &\text{\footnotesize{Frobenius algebras}}\ar[dr] & & \ar[dl] \text{\footnotesize{Gendo-symmetric algebras}} \\
   & &\text{\footnotesize{Gendo-Frobenius algebras}} \ar[d] &\\
   & &\text{\footnotesize{Morita algebras}} & }
\end{xy}$$
In the above diagram, an arrow means the class on top is contained by the class below.\\

In this article, we give a new characterisation of gendo-Frobenius algebras. Moreover, we construct a comultiplication for gendo-Frobenius algebras, which specialises to the known comultiplications on Frobenius and on gendo-symmetric algebras. In addition, we show that Frobenius algebras are precisely those gendo-Frobenius algebras that have a counit compatible with this comultiplication. The new characterisation of gendo-Frobenius algebras is a key for constructing the comultiplication for gendo-Frobenius algebras.\\

\textbf{Main results.} (a) (\emph{Theorem} \ref{PropgFsigma}) \emph{Let} $A$ \emph{be a finite dimensional} $k$\emph{-algebra. Then} $A$ \emph{is gendo-Frobenius if and only if there exists an automorphism} $\sigma \in \text{Aut}(A)$ \emph{such that} $\text{D}(A)_{\sigma^{-1}}\otimes_A \text{D}(A)\cong \text{D}(A)$ \emph{as} $A$\emph{-bimodules and} $\sigma$ \emph{is uniquely determined up to an inner automorphism.}

(b) (\emph{Theorem} \ref{thmcomgenFrob} \& \emph{Proposition} \ref{propgenFrobFrobcounit}) \emph{Let} $A$ \emph{be a gendo-Frobenius algebra. Then there is a coassociative comultiplication} $\Delta: A \rightarrow  A \otimes_k A$ \emph{which is an} $A$\emph{-bimodule morphism}. \emph{In addition,} $(A,\Delta)$ \emph{has a counit if and only if} $A$ \emph{is Frobenius.}

\section{Preliminaries}
In this section, we give some necessary definitions, notions and results for introducing gendo-Frobenius algebras and their comultiplication. Throughout, all algebras and modules are finite dimensional over an arbitrary field $k$ unless stated otherwise. By $\text{D}$, we denote the usual $k$-duality functor $\text{Hom}_k(-,k)$.

Let $A$ be a finite dimensional $k$-algebra and $\omega$ be an automorphism of $A$. Suppose that $M$ is a left $A$-module. Here, ${}_{\omega} M$ is the left $A$-module such that ${}_{\omega} M = M$ as $k$-vector spaces and the left $A$-module structure is defined by $a \cdot m = \omega(a)m$ for all $a \in A$ and $m \in M$. Similarly, for a right $A$-module N, $N_{\omega}$ is the right $A$-module such that $N_{\omega} = N$ as $k$-vector spaces and the right $A$-module structure is defined by $n \cdot a = n\omega(a)$ for all $a \in A$ and $n \in N$. The automorphism group of an algebra $A$ is denoted by Aut$(A)$.

\begin{definition}\label{defnFrob} A finite dimensional $k$-algebra $A$ is called {\em Frobenius} if it satisfies one of the following equivalent conditions:

(i) There exists a linear form $\varepsilon : A \rightarrow k$ whose kernel does not contain a nonzero left ideal of $A$.

(ii) There exists an isomorphism $\lambda_L: A \rightarrow \text{D}(A)$ of left $A$-modules.

(iii) There exists a linear form $\varepsilon' : A \rightarrow k$ whose kernel does not contain a nonzero right ideal of $A$.

(iv) There exists an isomorphism $\lambda_R : A \rightarrow \text{D}(A)$ of right $A$-modules.
\end{definition}
This definition is based on \cite{Yamagata}, Theorem IV.2.1, which provides the equivalence of the four conditions.\\

The linear form $\varepsilon: A \rightarrow k$ in Definition \ref{defnFrob} is called {\em Frobenius form} and it is equal to $\lambda_L(1_A)$.
\begin{definition}\label{defnNakAut} An automorphism $\nu$ of a Frobenius algebra $A$ is called a {\em Nakayama automorphism} if $A_{\nu} \cong \text{D}(A)$ as $A$-bimodules.
\end{definition}
Every Frobenius algebra $A$ has a Nakayama automorphism which is unique up to inner automorphisms (\cite{Yamagata}, Corollary IV.3.5). We denote by $\nu_A$ a Nakayama automorphism of $A$.\\

We now give the definition of symmetric algebras which are special Frobenius algebras.
\begin{definition}\label{defnSym} A finite dimensional $k$-algebra $A$ is called {\em symmetric} if it satisfies one of the following equivalent conditions:

(i) There exists a linear form $\varepsilon : A \rightarrow k$ such that $\varepsilon(ab)=\varepsilon(ba)$ for all $a,b\in A$, and whose kernel does not contain a nonzero one-sided ideal of $A$.

(ii) There exists an isomorphism $\lambda: A \rightarrow \text{D}(A)$ of $A$-bimodules.
\end{definition}
This definition is based on \cite{Yamagata}, Theorem IV.2.2, which provides the equivalence of the two conditions.\\

A Frobenius algebra $A$ is symmetric if and only if $\nu_A$ is inner (\cite{Yamagata2}, Theorem 2.4.1). In this case, we may take the identity automorphism as a Nakayama automorphism.\\

In \cite{Abrams}, Abrams proved that Frobenius algebras are characterised by the existence of a comultiplication with properties like counit and coassociative:
\begin{theorem} \label{thm1}(\cite{Abrams}, Theorem 2.1) An algebra $A$ is a Frobenius algebra if and only if it has a coassociative counital comultiplication $\alpha: A \rightarrow A \otimes_k A$ which is a map of $A$-bimodules.
\end{theorem}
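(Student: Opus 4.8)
The plan is to prove the two implications separately, taking as the central tool the nondegenerate associative bilinear form $\langle a,b\rangle := \varepsilon(ab)$ attached to a Frobenius form $\varepsilon$; associativity $\langle ab,c\rangle=\langle a,bc\rangle$ is automatic from associativity of multiplication, and nondegeneracy is exactly the content of the equivalence (i)$\Leftrightarrow$(ii) in Definition \ref{defnFrob}.

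For the forward implication, suppose $A$ is Frobenius with form $\varepsilon$. I would fix a basis $\{x_i\}$ and the dual basis $\{y_i\}$ determined by $\varepsilon(x_iy_j)=\delta_{ij}$, which satisfy the reconstruction identities $z=\sum_i\varepsilon(zy_i)x_i$ and $z=\sum_i\varepsilon(x_iz)y_i$ for all $z\in A$. The comultiplication would be built from a Casimir-type element $w\in A\otimes_k A$ via $\alpha(a):=(a\otimes 1)w$, with $\varepsilon$ as candidate counit. The obvious guess $w=\sum_i x_i\otimes y_i$ is only balanced up to the Nakayama automorphism: using $\varepsilon(ab)=\varepsilon(b\,\nu_A(a))$ one finds $\sum_i ax_i\otimes y_i=\sum_i x_i\otimes y_i\,\nu_A(a)$. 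I would therefore take instead $w:=\sum_i x_i\otimes\nu_A^{-1}(y_i)$, which is genuinely balanced, $(a\otimes 1)w=(1\otimes a)w$ for every $a$; this is precisely the condition that $\alpha$ is a homomorphism of $A$-bimodules for the outer structure $a(u\otimes v)b=au\otimes vb$. The counit axioms $(\varepsilon\otimes\mathrm{id})\alpha(a)=a=(\mathrm{id}\otimes\varepsilon)\alpha(a)$ would then follow from the two reconstruction identities together with $\varepsilon\circ\nu_A=\varepsilon$, and coassociativity from the associativity of multiplication and the balanced property of $w$.

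For the converse, suppose $(A,\alpha,\varepsilon)$ is a coassociative counital comultiplication that is an $A$-bimodule map, and write $\alpha(1)=\sum_i e_i\otimes f_i$. The bimodule property forces $\alpha(a)=(a\otimes 1)\alpha(1)=\sum_i ae_i\otimes f_i$, and the counit axiom $(\varepsilon\otimes\mathrm{id})\alpha(a)=a$ gives the key identity $a=\sum_i\varepsilon(ae_i)f_i$ for all $a$. I would use this to show that $\ker\varepsilon$ contains no nonzero right ideal: if $R\subseteq\ker\varepsilon$ is a right ideal and $a\in R$, then $ae_i\in R$ and hence $\varepsilon(ae_i)=0$ for every $i$, so $a=\sum_i\varepsilon(ae_i)f_i=0$. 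By condition (iii) of Definition \ref{defnFrob}, $A$ is Frobenius.

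The hard part will be the forward implication, and in particular the interaction of the Nakayama automorphism with the two-sided bimodule requirement: for a non-symmetric Frobenius algebra the naive Casimir $\sum_i x_i\otimes y_i$ is only a one-sided module map, and the twist by $\nu_A^{-1}$ is what makes $\alpha$ a genuine bimodule map while keeping $\varepsilon$ as a two-sided counit (here $\varepsilon\circ\nu_A=\varepsilon$ is what rescues the second counit identity). Verifying coassociativity of the twisted coproduct is the remaining technical point; it is formally a consequence of associativity and the balanced identity, but carrying the $\nu_A^{-1}$-twist consistently through both $(\alpha\otimes\mathrm{id})\alpha$ and $(\mathrm{id}\otimes\alpha)\alpha$ needs care.
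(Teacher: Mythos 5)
Your proposal is correct in substance, but it takes a genuinely different route from the paper. The paper does not actually prove this theorem: it cites Abrams and only records the abstract construction, namely $\alpha := (\lambda_L^{-1}\otimes_k\lambda_L^{-1})\circ\mu^*\circ\lambda_L$, the dual of the multiplication map transported along a Frobenius isomorphism $\lambda_L: A\cong \text{D}(A)$, with counit $\varepsilon=\lambda_L(1_A)$. In that picture coassociativity and the bimodule property come essentially for free (they are the duals of associativity and of the bimodule property of $\mu$), and the nontrivial point, proved by Abrams, is the agreement $\alpha_L=\alpha_R$ of the left- and right-handed constructions. Your construction is instead element-wise: dual bases $\{x_i\},\{y_i\}$ with $\varepsilon(x_iy_j)=\delta_{ij}$, the Nakayama-twisted Casimir element $w=\sum_i x_i\otimes\nu_A^{-1}(y_i)$, and $\alpha(a)=(a\otimes 1)w$. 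This buys explicit formulas (of the kind that are useful for concrete computations such as Example \ref{examplegenFrob2}) and a very clean converse: your argument that $\ker\varepsilon$ contains no nonzero right ideal, from the identity $a=\sum_i\varepsilon(ae_i)f_i$, is exactly right and invokes Definition \ref{defnFrob}(iii) correctly. The price is having to manage the Nakayama twist by hand; your supporting identities $\sum_i ax_i\otimes y_i=\sum_i x_i\otimes y_i\,\nu_A(a)$, $\varepsilon\circ\nu_A=\varepsilon$, and the two reconstruction formulas do carry the counit and coassociativity verifications through as you claim.

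One slip should be fixed: the balanced property of $w$ is misstated. As written, $(a\otimes 1)w=(1\otimes a)w$ asserts that left multiplication by $a$ on the first tensor factor equals left multiplication by $a$ on the second factor, which is false in general (it already fails for group algebras of nonabelian groups and for the four-dimensional non-symmetric Frobenius algebra $B$ of Example \ref{GenFrobEx2}). What your twist actually achieves, and what the outer-bimodule property of $\alpha$ requires, is $(a\otimes 1)w=w(1\otimes a)$, i.e.\ $\sum_i ax_i\otimes\nu_A^{-1}(y_i)=\sum_i x_i\otimes\nu_A^{-1}(y_i)\,a$, which follows from your displayed identity by applying $\mathrm{id}\otimes\nu_A^{-1}$. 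Since your prose (the ``outer structure'' $a(u\otimes v)b=au\otimes vb$) and your subsequent use of the identity both refer to the correct statement, this is a notational slip rather than a gap in the argument, but the equation should be corrected.
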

Let $A$ be a Frobenius algebra and $\mu: A\otimes_k A \rightarrow A$ be the multiplication map. Since $A$ is Frobenius, there is a left $A$-module isomorphism $\lambda_L : A \cong \text{D}(A)$. Then we obtain a comultiplication $\alpha_L: A \rightarrow A \otimes_k A$ which is the composition $(\lambda_L ^{-1} \otimes_k \lambda_L^{-1})\circ \mu^* \circ \lambda_L$. Similary, we can define $\alpha_R$ which is the composition $(\lambda_R ^{-1} \otimes_k \lambda_R^{-1})\circ \mu^* \circ \lambda_R$. Abrams proved that $\alpha_L=\alpha_R$. Therefore, $\alpha:= \alpha_L=\alpha_R$. Here, $\varepsilon = \lambda_L(1_A)$ serves as a counit for $\alpha$ and this $\varepsilon$ is actually the Frobenius form of $A$.\\

To give the definition of gendo-symmetric algebras, we need the following concept.\\

\emph{Dominant dimension.} Let $A$ be a finite dimensional $k$-algebra. The dominant dimension of $A$ is at least $d$ (written as domdim$(A) \geq d$) if there is an injective coresolution
\[0 \longrightarrow A \longrightarrow I_0 \longrightarrow I_1 \longrightarrow \cdot\cdot\cdot \longrightarrow I_{d-1} \longrightarrow I_d \longrightarrow \cdot\cdot\cdot \]
such that all modules $I_i$ where $0\leq i \leq d-1$ are also projective.\\

A finite dimensional left $A$-module $M$ is said to have \emph{double centraliser property} if the canonical homomorphism of algebras $f: A \rightarrow \text{End}_B (M)$ is an isomorphism for $B= \text{End}_A(M)^{op}$.

If domdim$(A) \geq 1$, then $I_0$ in the definition of dominant dimension is projective-injective and up to isomorphism it is the unique minimal faithful left $A$-module. Therefore, it is of the form $Ae$ for some idempotent $e$ in $A$. Note that $Ae$ is a generator-cogenerator as a right $eAe$-module. If further domdim$(A) \geq 2$, then $Ae$ has double centraliser property, namely, $A \cong \text{End}_{eAe}(Ae)$ canonically.

\begin{definition}\label{defnGenSym} A finite dimensional $k$-algebra $A$ is called {\em gendo-symmetric} if it satisfies one of the following equivalent conditions:

(i) $A$ is the endomorphism algebra of a generator over a symmetric algebra.

(ii) $\text{Hom}_A({}_A \text{D}(A), {}_A A)\cong A$ as $A$-bimodules.

(iii) $\text{D}(A)\otimes _A \text{D}(A) \cong \text{D}(A)$ as $A$-bimodules.

(iv) domdim$(A) \geq 2$ and $\text{D}(Ae)\cong eA$ as $(eAe,A)$-bimodules, where $Ae$ is a basic faithful projective-injective $A$-module.
\end{definition}
This definition is based on \cite{Koenig2}, Theorem 3.2, which provides the equivalence of the four
conditions.\\

By condition (iv) in Definition \ref{defnGenSym}, symmetric algebras are gendo-symmetric by choosing $e=1_A$.\\

Gendo-symmetric algebras have a comultiplication with some special properties. In fact, Fang and Koenig gave the following theorem.

\begin{theorem} (\cite{Koenig1}, Theorem 2.4 and Proposition 2.8) Let $A$ be a gendo-symmetric algebra. Then $A$ has a coassociative comultiplication $\Delta : A \rightarrow A\otimes_k A$ which is an A-bimodule morphism. In addition, $(A,\Delta)$ has a counit if and only if $A$ is symmetric.
\end{theorem}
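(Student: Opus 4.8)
The plan is to exploit characterisation (iii) of Definition~\ref{defnGenSym}: fix an $A$-bimodule isomorphism $\phi\colon \text{D}(A)\otimes_A \text{D}(A)\xrightarrow{\sim}\text{D}(A)$, and read the desired comultiplication on $A$ as the $k$-dual of an induced multiplication on $\text{D}(A)$. Let $q\colon \text{D}(A)\otimes_k\text{D}(A)\twoheadrightarrow \text{D}(A)\otimes_A\text{D}(A)$ be the canonical projection and set $m:=\phi\circ q\colon \text{D}(A)\otimes_k\text{D}(A)\to\text{D}(A)$, a morphism of $A$-bimodules since both $q$ and $\phi$ are. Applying the duality $\text{D}$ and the natural identifications $\text{D}(\text{D}(A))\cong A$ and $\text{D}(\text{D}(A)\otimes_k\text{D}(A))\cong A\otimes_k A$ produces $\Delta:=\text{D}(m)\colon A\to A\otimes_k A$. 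Because the two applications of $\text{D}$ restore the original handedness, $\Delta$ is again an $A$-bimodule morphism.

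The coassociativity of $\Delta$ is dual to the associativity of $m$; since $m$ factors through $\otimes_A$, this is precisely the associativity of the multiplication $\phi$ on $\text{D}(A)$, i.e.\ the equality of the two composites $\phi\circ(\phi\otimes_A\mathrm{id})$ and $\phi\circ(\mathrm{id}\otimes_A\phi)$ from $\text{D}(A)\otimes_A\text{D}(A)\otimes_A\text{D}(A)$ to $\text{D}(A)$. I expect this to be the main obstacle, since a bare bimodule isomorphism need not be associative. I would resolve it by descending to the underlying symmetric algebra through characterisation (i)/(iv): writing $A\cong\text{End}_B(M)$ with $B$ symmetric and $M$ a generator, the symmetric (Casimir) structure of $B$ furnishes a canonical associative multiplication on $\text{D}(A)$, which I would identify with $\phi$ after adjusting $\phi$ within its bimodule-automorphism freedom. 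Alternatively, a rigidity argument shows that the discrepancy between the two triple products is a bimodule automorphism of $\text{D}(A)$ that can be absorbed into the choice of $\phi$.

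For the counit, observe that a counit $\varepsilon\colon A\to k$ for $\Delta$ dualises to an element $\eta\in\text{D}(A)$ which is a two-sided unit for $m$; thus $(A,\Delta)$ is counital if and only if $(\text{D}(A),m)$ is a unital algebra. If $A$ is symmetric, then Definition~\ref{defnSym}(ii) supplies a bimodule isomorphism $A\cong\text{D}(A)$, along which I would transport the unit of $A$ and choose $\phi$ accordingly; the symmetrising form then serves as the counit, recovering the situation of Theorem~\ref{thm1}. Conversely, given a unit $\eta$ for $m$, I would first deduce $a\rightharpoonup\eta=\eta\leftharpoonup a$ for all $a\in A$ from the unit axioms and the $A$-balancing of $m$, and then check that $a\mapsto a\rightharpoonup\eta$ is an $A$-bimodule morphism $A\to\text{D}(A)$; it is injective because $A$ acts faithfully on $\text{D}(A)$, hence bijective by dimension, giving $A\cong\text{D}(A)$ as bimodules and so $A$ symmetric. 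Finally, the equivalence is independent of the choices made in the construction of $\Delta$, since any unit forces the bimodule isomorphism $A\cong\text{D}(A)$ irrespective of $\phi$.
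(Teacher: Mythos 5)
Your skeleton is the right one, and it matches the strategy of the paper (which obtains this theorem as the $\sigma=\mathrm{id}$ special case of its gendo-Frobenius construction): dualise a multiplication $m$ on $\text{D}(A)$ to get $\Delta$, and translate ``counit for $\Delta$'' into ``unit for $(\text{D}(A),m)$''. Your counit half is essentially correct: a unit $\eta$ forces $a\cdot\eta=\eta\cdot a$, and $a\mapsto a\cdot\eta$ is a bimodule map which is injective and hence, by dimensions, an isomorphism $A\cong\text{D}(A)$ --- though note that injectivity needs the unit property, not just faithfulness: from $a\cdot\eta=0$ one gets $a\cdot f=m\bigl((a\cdot\eta)\otimes f\bigr)=0$ for all $f$, and \emph{then} faithfulness gives $a=0$. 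The genuine gap is exactly where you predicted it: associativity of $m$, i.e.\ coassociativity of $\Delta$, and neither of your proposed repairs closes it. Your alternative (b) is in fact unworkable. For a gendo-symmetric algebra one has $\text{End}_{A\text{-}A}(\text{D}(A))\cong Z(eAe)\cong Z(A)$, so every bimodule automorphism of $\text{D}(A)$ is multiplication by an invertible central element $w$, which acts identically on the left and on the right. Replacing $\phi$ by $w\cdot\phi$ replaces $m$ by $w\cdot m$, and since $m$ is a bimodule map factoring through $\otimes_A$ and $w$ is central, \emph{both} triple products scale by the same factor: $m_w(m_w\otimes 1)=w^2\,m(m\otimes 1)$ and $m_w(1\otimes m_w)=w^2\,m(1\otimes m)$. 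Hence the ``discrepancy'' automorphism $z$ with $m(m\otimes 1)=z\cdot m(1\otimes m)$ is invariant under every admissible change of $\phi$ and cannot be absorbed; rigidity alone will never make an arbitrary bimodule isomorphism $\phi$ associative.

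What is needed is your alternative (a), carried out in full --- and that is precisely the actual content of the proof, both in Fang--Koenig and in this paper. One does not start from an abstract $\phi$; one constructs a specific one: realise $\text{D}(A)\cong Ae\otimes_{eAe}eA$ by an explicit isomorphism $\gamma$ (Lemma \ref{lemmagamma} with $\sigma=\mathrm{id}$, using $\text{D}(Ae)\cong eA$ from the symmetry of $eAe$), and define the multiplication by $\gamma(ae\otimes eb)\cdot\gamma(ce\otimes ed)=\gamma(aebce\otimes ed)$, i.e.\ by multiplication inside $A$ itself. Associativity then becomes the one-line verification of Lemma \ref{lemmamultp}, because both triple products equal $\gamma(aebcedxe\otimes ey)$. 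Until that concrete realisation is produced, the first assertion of the theorem --- coassociativity of $\Delta$ --- remains unproved in your proposal; with it, your argument collapses into essentially the paper's proof.
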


Kerner and Yamagata \cite{Kerner} investigated two generalisations of gendo-symmetric algebras. The most general one is motivated by Morita \cite{Morita} and they called a finite dimensional algebra $A$ a {\em Morita algebra}, if $A$ is isomorphic to the endomorphism algebra of a finite dimensional faithful module over a self-injective algebra. Morita algebras contain both gendo-symmetric and Frobenius algebras.

The second one is defined by relaxing the condition on the bimodule isomorphism in Definition \ref{defnGenSym} (ii), and we focus on this generalisation in this article.

The following lemma and the proof of this lemma are rearranged versions of Lemma 2.4 in \cite{Kerner} and its proof by taking into account the definition of Nakayama automorphism which is used here differently compared to \cite{Kerner}.

\begin{lemma}\label{lemmagenFrob} Let $A$ be a finite dimensional $k$-algebra and $\text{D}(Ae)\cong eA$ as right $A$-modules for an idempotent $e$ of $A$. Then $eAe$ is Frobenius and $ {}_{\nu_{eAe}^{-1}}eA \cong \text{D}(Ae)$ as $(eAe,A)$-bimodules, where $\nu_{eAe}$ is a Nakayama automorphism of $eAe$.
\end{lemma}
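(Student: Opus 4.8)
The plan is to exploit the given right $A$-module isomorphism $f\colon \text{D}(Ae)\xrightarrow{\sim} eA$ in two stages: first I would extract the Frobenius property of $eAe$ by ``multiplying by $e$'', and then measure the failure of $f$ to be left $eAe$-linear by an automorphism that I will identify with $\nu_{eAe}^{-1}$. For the Frobenius claim I would apply the functor $-\otimes_A Ae$ to $f$. Since $Ae$ is an $(A,eAe)$-bimodule, this turns the right $A$-isomorphism into a right $eAe$-isomorphism $\text{D}(Ae)\otimes_A Ae\cong eA\otimes_A Ae$. The right-hand side is canonically $eAe$, while for the left-hand side I would establish a natural isomorphism $\text{D}(Ae)\otimes_A Ae\cong \text{D}(eAe)$ of $eAe$-bimodules, given on generators by $\phi\otimes ae\mapsto\bigl(c\mapsto\phi(ae\,c)\bigr)$; checking that this is $A$-balanced and $eAe$-bilinear is a direct computation. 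Combining the two yields $eAe\cong\text{D}(eAe)$ as right $eAe$-modules, so $eAe$ is Frobenius by Definition~\ref{defnFrob}(iv).

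For the bimodule claim, note that $f$ is right $A$-linear but need not respect the left $eAe$-actions. Using the algebra isomorphism $\text{End}_A(eA)\cong eAe$ given by left multiplication, conjugation by $f$ sends the left-multiplication action of $b\in eAe$ on $\text{D}(Ae)$ to left multiplication by some $\sigma(b)\in eAe$; that is, $f(b\cdot\phi)=\sigma(b)\,f(\phi)$ for all $\phi$. Because conjugation by the isomorphism $f$ is an algebra isomorphism of endomorphism rings, $\sigma$ is a well-defined element of $\text{Aut}(eAe)$, and by construction $f$ induces an $(eAe,A)$-bimodule isomorphism ${}_{\sigma}eA\cong\text{D}(Ae)$.

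It then remains to identify $\sigma$ with $\nu_{eAe}^{-1}$. Applying $-\otimes_A Ae$ once more to ${}_{\sigma}eA\cong\text{D}(Ae)$ and reusing the two isomorphisms from the first stage gives ${}_{\sigma}eAe\cong\text{D}(eAe)$ as $eAe$-bimodules. On the other hand, the defining property $(eAe)_{\nu_{eAe}}\cong\text{D}(eAe)$ of Definition~\ref{defnNakAut}, together with the standard bimodule identity $(eAe)_{\nu_{eAe}}\cong{}_{\nu_{eAe}^{-1}}(eAe)$, gives $\text{D}(eAe)\cong{}_{\nu_{eAe}^{-1}}(eAe)$. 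Hence ${}_{\sigma}eAe\cong{}_{\nu_{eAe}^{-1}}eAe$ as bimodules, which forces $\sigma$ and $\nu_{eAe}^{-1}$ to differ by an inner automorphism. Since a Nakayama automorphism is only determined up to inner automorphisms, I may replace $\nu_{eAe}$ by a representative for which $\sigma=\nu_{eAe}^{-1}$ exactly, yielding ${}_{\nu_{eAe}^{-1}}eA\cong\text{D}(Ae)$ as $(eAe,A)$-bimodules.

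The main obstacle I anticipate is the bimodule bookkeeping: tracking which side each twist lands on, verifying the natural isomorphism $\text{D}(Ae)\otimes_A Ae\cong\text{D}(eAe)$ as genuine $eAe$-\emph{bimodules} (not merely as one-sided modules), and pinning down the correct side and the inverse in the passage from $\sigma$ to the Nakayama automorphism. Once these compatibilities are checked, the identification of $\sigma$ with $\nu_{eAe}^{-1}$ up to inner automorphisms follows immediately from the cancellation property of twisted regular bimodules, namely that ${}_{\alpha}B\cong{}_{\beta}B$ as $B$-bimodules precisely when $\alpha\beta^{-1}$ is inner.
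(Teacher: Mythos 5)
Your proof is correct, and it shares the paper's overall skeleton --- upgrade the one-sided isomorphism $eA\cong\text{D}(Ae)$ to a twisted $(eAe,A)$-bimodule isomorphism, cut down to the corner algebra $eAe$, and identify the twist with an inverse Nakayama automorphism --- but it executes the two key steps differently. The paper gets the twisted isomorphism ${}_{\alpha}eA\cong\text{D}(Ae)$ in one stroke by citing Lemma 1.1 of Kerner--Yamagata \cite{Kerner}, after observing that the natural maps from $eAe$ to $\text{End}_A(eA)$ and to the endomorphism ring of $\text{D}(Ae)$ are isomorphisms; your second stage (conjugating the left $eAe$-action through $f$ and $\text{End}_A(eA)\cong eAe$) is in effect a self-contained proof of that lemma in the case at hand --- the only point you gloss is that surjectivity of $\sigma$ needs either the analogous identification $eAe\cong\text{End}_A(\text{D}(Ae))$ or injectivity of $\sigma$ plus finite-dimensionality. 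The paper then cuts down by multiplying by $e$ on the right, which is the same operation as your $-\otimes_A Ae$ since $M\otimes_A Ae\cong Me$, and its endgame is more direct than yours: it dualises ${}_{\alpha}eAe\cong\text{D}(eAe)$ to obtain $\text{D}(eAe)_{\alpha}\cong eAe$, from which both conclusions are read off at once --- $eAe$ is Frobenius and $\alpha^{-1}$ is, by Definition \ref{defnNakAut}, a Nakayama automorphism --- with no inner-automorphism bookkeeping or re-choice of $\nu_{eAe}$. Your detour through ${}_{\sigma}eAe\cong{}_{\nu_{eAe}^{-1}}eAe$ and the coset structure of Nakayama automorphisms is valid (Nakayama automorphisms do form a full coset of the inner automorphisms), but it can be streamlined the same way: the identity ${}_{\sigma}(eAe)\cong(eAe)_{\sigma^{-1}}$ turns your isomorphism ${}_{\sigma}eAe\cong\text{D}(eAe)$ directly into $(eAe)_{\sigma^{-1}}\cong\text{D}(eAe)$, exhibiting $\sigma^{-1}$ itself as a Nakayama automorphism. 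What your route buys is independence from the external lemma, plus an early one-sided proof of the Frobenius property via Definition \ref{defnFrob}(iv); what the paper's route buys is brevity and an exact, rather than up-to-inner, identification of the twist.
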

\begin{proof} Observe that $(eAe,A)$-bimodules $\text{D}(Ae)$ and $eA$ are faithful $eAe$-modules and $l_{eA} : eAe \rightarrow \text{End}_A (eA)$ and $l_{\text{D}(Ae)} : eAe \rightarrow \text{End}_A (\text{D}(Ae))^{op}$ are isomorphisms. Let us apply Lemma 1.1 in \cite{Kerner} to the right $A$-module isomorphism $eA \cong \text{D}(Ae)$. Then we obtain an $(eAe,A)$-bimodule isomorphism ${}_{\alpha}eA \cong \text{D}(Ae)$ where $\alpha$ is an automorphism of $eAe$. Multiplying $e$ on the right implies an $(eAe,eAe)$-bimodule isomorphism ${}_{\alpha}eAe\cong \text{D}(eAe)$. By taking the dual of this isomorphism, we obtain that $\text{D}(eAe)_{\alpha}\cong eAe$ as $(eAe,eAe)$-bimodules. Therefore, $eAe$ is a Frobenius algebra and $\alpha^{-1}$ is a Nakayama automorphism of $eAe$. Thus, we get ${}_{\nu_{eAe}^{-1}}eA \cong \text{D}(Ae)$ as $(eAe,A)$-bimodules.
\end{proof}
\begin{definition} Let $A$ be a finite dimensional $k$-algebra. An idempotent $e$ of $A$ is called \emph{self-dual} if $\text{D}(eA) \cong Ae$ as left $A$-modules, and \emph{faithful} if both $Ae$ and $eA$ are faithful $A$-modules.
\end{definition}
Observe that self-duality of an idempotent is left–right symmetric. Moreover, an algebra $A$ is a Frobenius algebra if and only if the identity $1_A$ of $A$ is a self-dual idempotent.\\

A hierarchy of the finite dimensional algebras mentioned in this article can be given as follows.

$$\begin{xy}
\xymatrixrowsep{0.1in}
\xymatrixcolsep{0.1in}
  \xymatrix{
  & \text{\small{Self-injective algebras}} & \subset & \text{\small{Morita algebras}} \\
  & \cup & & \cup\\
  & \text{\small{Frobenius algebras}}  & \subset & \text{\small{Gendo-Frobenius algebras}} \\
  & \cup & & \cup \\
  & \text{\small{Symmetric algebras}} & \subset & \text{\small{Gendo-symmetric algebras}}
   }
\end{xy}$$

Comultiplications of symmetric algebras and Frobenius algebras given on the left part of the above diagram are known by \cite{Abrams}, and comultiplication of gendo-symmetric algebras given on the right part is known by \cite{Koenig1}. Gendo-Frobenius algebras are a common generalisation of Frobenius algebras and of gendo-symmetric algebras. The aim of this article is to construct a comultiplication for gendo-Frobenius algebras, which specialises to the known comultiplications on Frobenius and on gendo-symmetric algebras.

\section{Gendo-Frobenius algebras}
Inspired by \cite{Koenig2}, Kerner and Yamagata considered the case, when the module $\text{Hom}_A (D(A), A)$ is isomorphic to $A$, at least as a one-sided module and they obtained Theorem 3 in \cite{Kerner} which we use as definition of gendo-Frobenius algebras as follows.

\begin{definition}\label{theoremGenFrob} A finite dimensional $k$-algebra $A$ is called {\em gendo-Frobenius} if it satisfies one of the following equivalent conditions:

(i) Hom$_A(\text{D}(A), A)\cong A$ as left $A$-modules.

(ii) Hom$_A(\text{D}(A), A)\cong A$ as right $A$-modules.

(iii) $A$ is a Morita algebra with an associated idempotent $e$ such that $eAe$ is a Frobenius algebra with Nakayama automorphism $\nu_{eAe}$ and $Ae\cong Ae_{\nu_{eAe}} $ as right $eAe$-modules.

(iv) $A$ is a Morita algebra with an associated idempotent $e$ such that $eAe$ is a Frobenius algebra with Nakayama automorphism $\nu_{eAe}$ and $eA\cong {}_{\nu_{eAe}} eA $ as left $eAe$-modules.

(v) $A$ is isomorphic to the endomorphism algebra of a finite dimensional faithful right module $M$ over a Frobenius algebra $B$ such that $M\cong M_{\nu_B} $ as right $B$-modules.

(vi) $A$ is isomorphic to the opposite endomorphism algebra of a finite dimensional faithful left module $N$ over a Frobenius algebra $B$ such that $N\cong {}_{\nu_B}N $ as left $B$-modules.
\end{definition}
\begin{remark} The idempotent $e$ of $A$ in Definition \ref{theoremGenFrob} is self-dual and faithful. See the proof of Theorem 3 in \cite{Kerner}.
\end{remark}
By the conditions (iii) and (iv) in Definition \ref{theoremGenFrob}, Frobenius algebras are gendo-Frobenius by choosing $e=1_A$.

\begin{remark} Kerner and Yamagata \cite{Kerner} proved that a finite dimensional $k$-algebra $A$ is a Morita algebra if and only if $\text{Hom}_A({}_A \text{D}(A),{}_A A)$ is a faithful left $A$-module and $\text{domdim}(A) \geq 2$. Therefore, Morita algebras do, in general, not satisfy the condition (i) and (ii) given in the definition of gendo-Frobenius algebras.
\end{remark}
\begin{example}\label{GenFrobEx2} Let $B$ be the path algebra of the following quiver
$$\begin{xy}
  \xymatrix{
1 \ar@<1ex>[r]^{\beta_1}  &2	\ar@<1ex>[l]^{\beta_2}   }
\end{xy}
$$
such that $\beta_1\beta_2=0=\beta_2\beta_1$. Then $B$ is a nonsymmetric Frobenius algebra and it has a Nakayama automorphism $\nu_B$ such that $\nu_B(e_1)=e_2$, $\nu_B (e_2)=e_1$, $\nu_B(\beta_1)= \beta_2$ and $\nu_B (\beta_2)=\beta_1$. Let $M=B \oplus S_1 \oplus S_2$, where $S_1$ and $S_2$ are simple modules corresponding to $e_1$ and $e_2$, respectively; and $A= \text{End}_B (M)$. Then $A$ is isomorphic to the path algebra of the following quiver
$$\begin{xy}
\xymatrixrowsep{0.2in}
\xymatrixcolsep{0.2in}
  \xymatrix{
    &         & 1 \ar[dl]_{\alpha_1} 	&	 \\
    & 3  \ar[dr]_{\alpha_3}   &	& 4 \ar[ul]_{\alpha_4}	\\
    &         & 2 \ar[ur]_{\alpha_2}          &}
\end{xy}$$
such that $\alpha_3\alpha_2=0=\alpha_4\alpha_1$.

The right $B$-module $M$ is faithful and $M_{\nu_B}\cong M$ as right $B$-modules. Hence, by Definition \ref{theoremGenFrob}, we obtain that $A$ is a gendo-Frobenius algebra.
\end{example}
\begin{remark} Let us consider the algebra $B$ in Example \ref{GenFrobEx2}. Let $M = B \oplus S_1$. Then $M_B$ is faithful and $A = \text{End}_B (M)$ is a Morita algebra. However, $M_{\nu_B} \ncong M$ as right $B$-modules. Hence, by Definition \ref{theoremGenFrob}, $A$ is not gendo-Frobenius.
\end{remark}
\begin{remark} The class of gendo-Frobenius algebras is not closed under Morita equivalences since the property $\text{Hom}_A (\text{D}(A),A)\cong A$ as left (or right) $A$-modules is not Morita invariant.
\end{remark}
The following proposition and the proof of this proposition are the rearranged versions of Proposition 3.5 in \cite{Kerner} and its proof similarly as Lemma \ref{lemmagenFrob}, and it shows that, in case $A$ is gendo-Frobenius, a Nakayama automorphism $\nu_{eAe}$ for a faithful and self-dual basic idempotent $e$ of $A$ extends to an automorphism of $A$.
\begin{proposition}\label{propsigma} Let $A$ be a gendo-Frobenius algebra with a faithful and self-dual idempotent $e$. Then there is an automorphism $\sigma \in \text{Aut}(A)$ such that

(i) Hom$_A(\text{D}(A), A)_{\sigma}\cong A$ as $(A, A)$-bimodules and $\sigma$ is uniquely determined up to an inner automorphism.

(ii) $eA \cong {}_{\nu_{eAe}}eA _{\sigma}$ as $(eAe, A)$-bimodules.

(iii) Moreover, in case $e$ is basic, we can choose the $\sigma$ such that $\sigma(e)=e$ and the restriction of $\sigma$ to $eAe$ is a Nakayama automorphism of $eAe$.
\end{proposition}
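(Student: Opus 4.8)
The plan is to follow the pattern of the proof of Lemma~\ref{lemmagenFrob}, but now at the level of $A$ itself rather than $eAe$, and then to realise the resulting automorphism concretely through the double centraliser property so as to control its restriction to $eAe$. For (i) I would first record the two $(A,A)$-bimodules in play, namely $\text{Hom}_A(\text{D}(A),A)$ and $A$, observe that they are isomorphic as left $A$-modules by Definition~\ref{theoremGenFrob}(i), and then apply Lemma~1.1 of \cite{Kerner} exactly as in Lemma~\ref{lemmagenFrob}. The hypotheses to check are the centraliser conditions: for $A$ this is the standard balancedness $A \xrightarrow{\sim} \text{End}_A({}_AA)^{op}$, and for $\text{Hom}_A(\text{D}(A),A)$ it follows because this module is isomorphic to ${}_AA$, so its left-endomorphism ring is $A^{op}$, while the right regular action is faithful (equivalently, the trace ideal $\tau=\sum_{\phi}\text{im}\,\phi$ of $\text{D}(A)$ in $A$ has zero right annihilator); a dimension count then upgrades injectivity to an isomorphism $A \xrightarrow{\sim} \text{End}_A({}_A\text{Hom}_A(\text{D}(A),A))^{op}$. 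The lemma promotes the one-sided isomorphism to a bimodule isomorphism $\text{Hom}_A(\text{D}(A),A)_{\sigma}\cong A$, and its uniqueness clause — the underlying left-module isomorphism is unique up to right multiplication by a unit of $A$ — gives that $\sigma$ is determined up to an inner automorphism.

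To connect $\sigma$ with $\nu_{eAe}$, which is what (ii) and (iii) require, I would next realise a representative of this inner class through the Morita structure. Since $A$ has dominant dimension at least $2$, the left multiplication map $A \xrightarrow{\sim} \text{End}_{eAe}(Ae)$ is an isomorphism, and by Definition~\ref{theoremGenFrob}(iii) there is a right $eAe$-module isomorphism $\theta\colon Ae \to Ae_{\nu_{eAe}}$. Left multiplication by $a\in A$ is simultaneously a right $eAe$-endomorphism of $Ae$ and of $Ae_{\nu_{eAe}}$ (the right twist does not affect the left action), so conjugation by $\theta$ carries $\text{End}_{eAe}(Ae)$ onto $\text{End}_{eAe}(Ae_{\nu_{eAe}})=\text{End}_{eAe}(Ae)$. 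Transporting through the double centraliser isomorphism yields an automorphism of $A$ characterised by $\theta(ax)=\sigma(a)\,\theta(x)$ for $a\in A$, $x\in Ae$; the ambiguity of $\theta$ (unique up to left multiplication by a unit of $A$) matches the inner ambiguity of $\sigma$, so we may take this automorphism to be $\sigma$.

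For (ii) I would cut the data down by $e$. Combining the defining relation $\theta(ax)=\sigma(a)\theta(x)$ with the right $eAe$-twist $\theta(xb)=\theta(x)\nu_{eAe}(b)$ records precisely how the left $eAe$- and right $A$-actions on $eA$ must be retwisted. Together with the self-duality $\text{D}(Ae)\cong eA$ as right $A$-modules and the $(eAe,A)$-bimodule isomorphism ${}_{\nu_{eAe}^{-1}}eA\cong \text{D}(Ae)$ supplied by Lemma~\ref{lemmagenFrob}, this produces the $(eAe,A)$-bimodule isomorphism $eA\cong {}_{\nu_{eAe}}eA_{\sigma}$. In effect the left $\nu_{eAe}^{-1}$-twist of Lemma~\ref{lemmagenFrob} and the right $\sigma$-twist of (i) are two ways of promoting the same one-sided isomorphism $\text{D}(Ae)\cong eA$ to a bimodule isomorphism, and equating them is exactly the assertion of (ii).

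For (iii), assuming $e$ basic, I would adjust the representative $\sigma$ within its inner class so that $\sigma(e)=e$: replacing $\theta$ by $u\theta$ for a unit $u$ conjugates $\sigma$, and since $e$ and $\sigma(e)$ are conjugate idempotents and $e$ is basic, a suitable unit moves $\sigma(e)$ back to $e$. Once $\sigma(e)=e$, the automorphism $\sigma$ preserves $eAe$, and the relation of (ii) restricted to $e$-components forces $\text{D}(eAe)_{\sigma|_{eAe}}\cong eAe$, so $\sigma|_{eAe}$ is a Nakayama automorphism by Definition~\ref{defnNakAut}. I expect this final coordination to be the main obstacle: a priori $\sigma$ only fixes $e$ up to conjugacy and only restricts to $eAe$ as a Nakayama automorphism up to inner automorphisms of $eAe$, and the real content is that the basicness of $e$ is exactly what allows these two adjustments to be made simultaneously.
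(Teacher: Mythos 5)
Your part (i) follows the paper's route (apply Lemma 1.1 of \cite{Kerner} to the left $A$-module isomorphism ${}_AA \cong {}_A\text{Hom}_A(\text{D}(A),A)$), and the hypothesis checks you sketch are the right ones. The genuine gap is in part (ii). There you construct a \emph{second} automorphism, say $\sigma'$, by transporting conjugation by a right $eAe$-module isomorphism $\theta\colon Ae \to Ae_{\nu_{eAe}}$ through the double centraliser isomorphism $A\cong \text{End}_{eAe}(Ae)$, and then declare that ``we may take this automorphism to be $\sigma$'' because the ambiguity of $\theta$ and the inner ambiguity of $\sigma$ ``match''. That inference is invalid: the fact that each of the two constructions is well defined up to inner automorphisms says nothing about the two inner classes being the \emph{same} class. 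The proposition demands a single $\sigma$ for which (i) and (ii) hold simultaneously, and the compatibility between the Hom-side automorphism of (i) and the $Ae$-side automorphism attached to $\nu_{eAe}$ is exactly the content of (ii) --- it is what must be proved, not what may be assumed. (Granted the identification, your dualisation of the relations $\theta(ax)=\sigma(a)\theta(x)$ and $\theta(xb)=\theta(x)\nu_{eAe}(b)$ does give $eA\cong {}_{\nu_{eAe}}eA_{\sigma}$, but the granting is the whole point.) The paper closes this gap by a computation that never leaves the $\sigma$ of (i): cut the bimodule isomorphism $A\cong \text{Hom}_A(\text{D}(A),A)_{\sigma}$ by $e$ on the left and use $\text{D}(eA)\cong Ae_{\nu_{eAe}}$ as $(A,eAe)$-bimodules, so that $eA \cong \text{Hom}_A(\text{D}(eA),A)_{\sigma} \cong {}_{\nu_{eAe}}\text{Hom}_A(Ae,A)_{\sigma} \cong {}_{\nu_{eAe}}eA_{\sigma}$. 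Some such computation is unavoidable in your argument too; once you have it, the detour through $\theta$ is superfluous.

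There are also two smaller problems in (iii). First, you assert without justification that $e$ and $\sigma(e)$ are conjugate; this needs the observation that (ii), read as right $A$-modules, gives $eA\cong eA_{\sigma}\cong \sigma^{-1}(e)A$, so $e$ and $\sigma^{-1}(e)$ are isomorphic idempotents and hence conjugate by Krull--Schmidt cancellation (the paper imports this from the proof of Proposition 3.5(iii) in \cite{Kerner}). Second, your twist direction is wrong: cutting (ii) by $e$ on the right (once $\sigma(e)=e$) yields $eAe_{\sigma_e}\cong {}_{\nu_{eAe}^{-1}}eAe \cong eAe_{\nu_{eAe}}\cong \text{D}(eAe)$, and it is the isomorphism $eAe_{\sigma_e}\cong \text{D}(eAe)$ that makes $\sigma_e$ a Nakayama automorphism by Definition~\ref{defnNakAut}. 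The isomorphism you wrote, $\text{D}(eAe)_{\sigma_e}\cong eAe$, is equivalent to $\text{D}(eAe)\cong eAe_{\sigma_e^{-1}}$ and would make $\sigma_e^{-1}$, not $\sigma_e$, a Nakayama automorphism; these conditions differ in general, since $\sigma_e^{2}$ need not be inner. Neither of these is fatal, but both need repair; the substantive missing idea remains the unproved identification in (ii).
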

\begin{proof} (i) The proof is similar to the proof of Proposition 3.5 (i) in \cite{Kerner}. But here we apply Lemma 1.1 in \cite{Kerner} to the isomorphism ${}_A A \cong {}_A\text{Hom}_A (\text{D}(A),A)$. So we obtain that there is an automorphism $\sigma$ such that $A \cong \text{Hom}_A (\text{D}(A),A)_{\sigma}$ as $(A,A)$-bimodules.

(ii) By applying $e$ on the left side of the $(A,A)$-bimodule isomorphism $A \cong \text{Hom}_A(\text{D}(A), A)_{\sigma}$, we obtain the following $(eAe,A)$-bimodule isomorphisms
\begin{align*}
  eA & \cong e\text{Hom}_A(\text{D}(A), A)_{\sigma} = \text{Hom}_A(\text{D}(A)e, A)_{\sigma} \\
   & = \text{Hom}_A(\text{D}(eA), A)_{\sigma} \cong \text{Hom}_A(Ae_{\nu_{eAe}}, A)_{\sigma}\\
   & = {}_{\nu_{eAe}}\text{Hom}_A(Ae, A)_{\sigma} \cong {}_{\nu_{eAe}}eA_{\sigma}
\end{align*}
since $\text{D}(eA)\cong Ae_{\nu_{eAe}}$ as $(A,eAe)$-bimodules.

(iii) We first replace $\sigma$ in the proof of Proposition 3.5 (iii) in \cite{Kerner} with $\sigma^{-1}$. Then by using the same proof, we obtain that there is a $\theta \in $Aut$(A)$ with $\theta(x) = cxc^{-1}$ for all $x \in A$, where $c$ is an invertible element in $A$ such that $(\theta\sigma^{-1})(e) = e$ and $\theta\sigma^{-1} \in$Aut$(A)$. Observe that Hom$_A(\text{D}(A), A)\cong A_{\sigma^{-1}} \cong A_{\theta\sigma^{-1}}$ as $(A, A)$-bimodules, because $A\cong A_{\theta}$ as $(A, A)$-bimodules. By replacing $\sigma^{-1}$ with $\theta\sigma^{-1}$, we obtain that $\sigma^{-1}(e)=e$, that is, $\sigma(e)=e$. Now, we multiply $e$ on right side of the isomorphism $eA_{\sigma}\cong {}_{\nu^{-1}_{eAe}}eA$ given in (ii). Then we obtain $(eAe,eAe)$-bimodule isomorphisms $eAe_{\sigma_e}\cong {}_{\nu^{-1}_{eAe}} eAe \cong eAe_{\nu_{eAe}}$ , where $\sigma_e$ denotes the restriction of $\sigma$ to $eAe$. By using Lemma II.7.15 and Corollary IV.3.5 in \cite{Yamagata}, we obtain that $\sigma_e = \theta_e \nu_{eAe}$ for some inner automorphism $\theta_e$ of the algebra $eAe$, which shows that $\sigma_e$ is a Nakayama automorphism of $eAe$.
\end{proof}

\section{Comultiplication}
In this section, inspired by \cite{Koenig1}, we construct a coassociative comultiplication (possibly without a counit) for gendo-Frobenius algebras and give its properties.
\begin{lemma}\label{lemmagamma} Let $A$ be a gendo-Frobenius algebra with a faithful and self-dual idempotent $e$. Then there is an automorphism $\sigma \in \text{Aut}(A)$ such that $Ae \otimes_{eAe} eA_{\sigma} \cong D(A)$ as $A$-bimodules and $\sigma$ is uniquely determined up to an inner automorphism.
\end{lemma}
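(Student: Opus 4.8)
The plan is to extract the required automorphism from Proposition \ref{propsigma} and then to peel off the Nakayama twist, reducing the assertion to a ``twist-free'' isomorphism $Ae \otimes_{eAe} \text{D}(Ae) \cong \text{D}(A)$ that holds for every Morita algebra and is insensitive to the Frobenius data. First I would take the $\sigma \in \text{Aut}(A)$ supplied by Proposition \ref{propsigma}, which is already unique up to inner automorphism by part (i). The key is to recognise the twisted bimodule $eA_\sigma$ as $\text{D}(Ae)$. Part (ii) of that proposition gives $eA \cong {}_{\nu_{eAe}}eA_\sigma$ as $(eAe,A)$-bimodules; untwisting the left action yields $eA_\sigma \cong {}_{\nu_{eAe}^{-1}}eA$. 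Since $e$ is self-dual we have $\text{D}(Ae)\cong eA$ as right $A$-modules, so Lemma \ref{lemmagenFrob} applies and gives ${}_{\nu_{eAe}^{-1}}eA \cong \text{D}(Ae)$ as $(eAe,A)$-bimodules. Combining these, $eA_\sigma \cong \text{D}(Ae)$ as $(eAe,A)$-bimodules, whence $Ae \otimes_{eAe} eA_\sigma \cong Ae \otimes_{eAe}\text{D}(Ae)$ as $A$-bimodules by functoriality of $Ae\otimes_{eAe}-$.

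Second, I would establish the twist-free isomorphism $Ae \otimes_{eAe}\text{D}(Ae)\cong \text{D}(A)$ of $A$-bimodules. The natural candidate is the evaluation map $\phi : Ae \otimes_{eAe}\text{D}(Ae)\to \text{D}(A)$ given by $\phi(ae \otimes f)(x)=f(xae)$ for $x\in A$. A routine verification shows that $\phi$ is well defined over $eAe$ and respects both the left and right $A$-actions, so it is a homomorphism of $A$-bimodules. Using the identification $\text{D}(Ae)\cong e\text{D}(A)$ of left $eAe$-modules, $\phi$ is precisely the counit $Ae \otimes_{eAe} e\text{D}(A)\to \text{D}(A)$ of the adjunction $(Ae \otimes_{eAe}-,\ e(-))$ evaluated at $\text{D}(A)$; writing it this way is what makes bijectivity accessible.

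The main obstacle is the bijectivity of $\phi$, and this is where the Morita-algebra hypothesis enters. Since $A$ is a Morita algebra we have $\text{domdim}(A)\geq 2$, and $Ae$ is the faithful projective-injective module with the double centraliser property. The module $\text{D}(A)$ is injective as a left $A$-module, and for injective modules the counit of the above adjunction is an isomorphism; this is the standard dominant-dimension (Morita--Tachikawa) input, which I would isolate and prove or cite carefully rather than check by a dimension count. Granting this, $\phi$ is bijective, and being already a bimodule homomorphism it is an isomorphism of $A$-bimodules. Chaining with the reduction of the first paragraph gives $Ae \otimes_{eAe} eA_\sigma \cong \text{D}(A)$.

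Finally, for uniqueness I would apply the functor $e(-)$ to any isomorphism $Ae\otimes_{eAe}eA_{\sigma'}\cong \text{D}(A)$; since $e\cdot(Ae\otimes_{eAe}eA_{\sigma'})\cong eAe\otimes_{eAe}eA_{\sigma'}\cong eA_{\sigma'}$, this recovers $eA_{\sigma'}\cong \text{D}(Ae)\cong eA_\sigma$ as $(eAe,A)$-bimodules. Hence $\sigma'$ and $\sigma$ differ by an inner automorphism, exactly as in Proposition \ref{propsigma}(i), so $\sigma$ is determined up to inner automorphism. I expect the routine parts to be the bimodule checks for $\phi$ and the untwisting bookkeeping, while the genuine content is the counit-on-injectives statement flagged above.
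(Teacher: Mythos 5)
Your proof is correct, and its first half coincides with the paper's: both obtain the $(eAe,A)$-bimodule isomorphism $\tau : eA_\sigma \cong \text{D}(Ae)$ by combining Proposition \ref{propsigma}(ii) with Lemma \ref{lemmagenFrob}, and both inherit the uniqueness of $\sigma$ up to inner automorphisms from Proposition \ref{propsigma}(i). The second halves genuinely differ. The paper never constructs a map on the tensor product directly; it computes the $k$-dual instead, chaining the double centraliser property $A \cong \text{Hom}_{eAe}(Ae,Ae)$ (already recorded in its preliminaries for Morita algebras) with duality, $\tau$, and tensor-hom adjunction to get $A \cong \text{D}(Ae \otimes_{eAe} eA_\sigma)$, and then dualises; a by-product is the explicit formula $\gamma(ae \otimes eb)(x) = \tau(eb\sigma(x))(ae)$, on which the rest of the paper relies to define $m$ and $\Delta$. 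You instead apply $Ae \otimes_{eAe} -$ to $\tau$ and prove the twist-free isomorphism $Ae \otimes_{eAe} \text{D}(Ae) \cong \text{D}(A)$ by identifying it with the adjunction counit at the injective module $\text{D}(A)$. That counit lemma is true and standard, but it is not among the facts the paper sets up: it requires $\text{D}(A)$ to admit an $\text{add}(Ae)$-presentation, obtained by dualising the start of the injective coresolution of $A$ as a \emph{right} module, so you are implicitly using dominant dimension at least $2$ on the opposite side as well --- harmless here, since the Morita-algebra property (unlike dominant dimension a priori) is visibly left-right symmetric, or one can cite M\"uller's theorem, but it is an extra input, whereas the paper's only dominant-dimension input is the double centraliser property. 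The two routes are dual to one another and in fact yield the same map: your $\phi(ae \otimes \tau(eb))(x) = \tau(eb)(xae) = \tau(eb\sigma(x))(ae)$ is exactly the paper's $\gamma$. Finally, your uniqueness argument (apply $e(-)$, then deduce from an $(eAe,A)$-bimodule isomorphism $eA_{\sigma'} \cong eA_\sigma$ together with $\text{End}_{eAe}(eA) \cong A$ that $\sigma'\sigma^{-1}$ is inner) is actually more explicit than the paper's, which simply carries the uniqueness statement along from Proposition \ref{propsigma}.
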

\begin{proof} Following Lemma \ref{lemmagenFrob} and Proposition \ref{propsigma} (ii), fix an $(eAe,A)$-bimodule isomorphism $\tau : eA_{\sigma} \cong \text{D}(Ae)$, where $\sigma \in \text{Aut}(A)$ and it is uniquely determined up to an inner automorphism.

By the double centralizer property of $Ae$ and the isomorphism $\tau$, we obtain the following $A$-bimodule isomorphism
\begin{align*}
A \cong \text{Hom}_{eAe}(Ae, Ae) & \cong \text{Hom}_{eAe}(\text{D}(Ae), \text{D}(Ae))\\
& \cong \text{Hom}_{eAe}(eA_{\sigma}, \text{D}(Ae))\\
& \cong \text{Hom}_k (Ae \otimes_{eAe}  eA_{\sigma} , k ).
\end{align*}
Then by dualising $\text{Hom}_k (Ae \otimes_{eAe} eA_{\sigma}, k ) \cong A$, we obtain that there is an $A$-bimodule isomorphism $\gamma: Ae \otimes_{eAe} eA_{\sigma} \cong D(A)$ such that $\gamma(ae \otimes_{eAe} eb)(x)=\tau (eb\sigma(x))(ae)$ for all $a,b,x \in A$.
\end{proof}
\begin{theorem}\label{PropgFsigma} Let $A$ be a finite dimensional $k$-algebra. Then $A$ is gendo-Frobenius if and only if there exists an automorphism $\sigma \in \text{Aut}(A)$ such that $\text{D}(A)_{\sigma^{-1}}\otimes_A \text{D}(A)\cong \text{D}(A)$ as $A$-bimodules and $\sigma$ is uniquely determined up to an inner automorphism.
\end{theorem}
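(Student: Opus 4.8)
The plan is to prove both implications by passing between the tensor condition, the bimodule description of $\text{D}(A)$ provided by Lemma \ref{lemmagamma}, and the defining condition (i) of Definition \ref{theoremGenFrob}.

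For the forward direction, assume $A$ is gendo-Frobenius, so the associated idempotent $e$ is faithful and self-dual. Lemma \ref{lemmagamma} then supplies $\sigma \in \text{Aut}(A)$, unique up to inner automorphism, together with an $A$-bimodule isomorphism $Ae \otimes_{eAe} eA_{\sigma} \cong \text{D}(A)$. I would first note that twisting the right action by $\sigma^{-1}$ cancels the $\sigma$ on the second factor, yielding $\text{D}(A)_{\sigma^{-1}} \cong Ae \otimes_{eAe} eA$ as bimodules. Substituting both descriptions into the tensor product and reassociating gives
\[ \text{D}(A)_{\sigma^{-1}} \otimes_A \text{D}(A) \cong (Ae \otimes_{eAe} eA) \otimes_A (Ae \otimes_{eAe} eA_{\sigma}) \cong Ae \otimes_{eAe} (eA \otimes_A Ae) \otimes_{eAe} eA_{\sigma}. \]
The canonical $(eAe,eAe)$-bimodule isomorphism $eA \otimes_A Ae \cong eAe$ collapses the middle factor and returns $Ae \otimes_{eAe} eA_{\sigma} \cong \text{D}(A)$, the required $A$-bimodule isomorphism. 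Uniqueness of $\sigma$ up to inner automorphism is inherited directly from Lemma \ref{lemmagamma}.

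For the converse, assume $\text{D}(A)_{\sigma^{-1}} \otimes_A \text{D}(A) \cong \text{D}(A)$ as bimodules for some $\sigma \in \text{Aut}(A)$. Applying $\text{D}$ and using $\text{D}(\text{D}(A)) \cong A$ turns the right-hand side into $A$ as a bimodule. For the left-hand side I would invoke the tensor-hom adjunction $\text{D}(U \otimes_A V) \cong \text{Hom}_A(V, \text{D}(U))$ with $U = \text{D}(A)_{\sigma^{-1}}$ and $V = \text{D}(A)$, together with $\text{D}(\text{D}(A)_{\sigma^{-1}}) \cong {}_{\sigma^{-1}}A$, to obtain $\text{Hom}_A(\text{D}(A), {}_{\sigma^{-1}}A) \cong A$ as bimodules. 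The key simplification is that $\sigma$ furnishes a left $A$-module isomorphism ${}_{\sigma^{-1}}A \cong A$ via $m \mapsto \sigma(m)$, and post-composition with it is compatible with the left $A$-structure of the Hom-space, which arises from the right action on $\text{D}(A)$. Hence $\text{Hom}_A(\text{D}(A), A) \cong A$ as left $A$-modules, which is exactly condition (i) of Definition \ref{theoremGenFrob}, so $A$ is gendo-Frobenius. For uniqueness, feeding this gendo-Frobenius structure back through the forward construction recovers the automorphism of Lemma \ref{lemmagamma}, pinning $\sigma$ down up to an inner automorphism.

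The step I expect to be the main obstacle is the bookkeeping of one-sided twists in the converse: verifying that the right twist $\sigma^{-1}$ on $\text{D}(A)$ passes through both the $k$-duality and the adjunction to a left twist on $A$, and in particular that this left twist dissolves into a genuine one-sided isomorphism rather than merely a bimodule one. Getting this right is what allows the one-sided defining condition of gendo-Frobenius algebras to be recovered precisely, and it is also the point where the precise placement of $\sigma$ versus $\sigma^{-1}$ must be tracked with care.
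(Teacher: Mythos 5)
Your proposal is correct and follows essentially the same route as the paper: the forward direction uses Lemma \ref{lemmagamma} (and its untwisted variant $Ae\otimes_{eAe}eA\cong \text{D}(A)_{\sigma^{-1}}$) to collapse $(Ae \otimes_{eAe} eA)\otimes_A(Ae\otimes_{eAe}eA_{\sigma})$ via $eA\otimes_A Ae\cong eAe$, and the converse dualises the tensor isomorphism and removes the twist (your post-composition with $\sigma$ is exactly the paper's identification $\text{Hom}_A({}_{\sigma}\text{D}(A),A)\cong \text{Hom}_A(\text{D}(A),A)_{\sigma}$, after which only the left structure is retained). Both proofs also inherit the uniqueness of $\sigma$ up to inner automorphism from Lemma \ref{lemmagamma} in the same way.
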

\begin{proof} Let $A$ be gendo-Frobenius. By the isomorphism $\gamma$, observe that there is an $A$-bimodule isomorphism $\gamma': Ae \otimes_{eAe} eA \cong D(A)_{\sigma^{-1}}$ such that $\sigma \in \text{Aut}(A)$ and it is uniquely determined up to an inner automorphism. Hence, there is an $A$-bimodule isomorphism
\begin{align*}
\epsilon: \text{D}(A)_{\sigma^{-1}}\otimes_A \text{D}(A) & \stackrel{(1)}{\cong} (Ae \otimes_{eAe} eA)\otimes_A (Ae \otimes_{eAe} eA_{\sigma})\\
& \cong Ae \otimes_{eAe} eAe \otimes_{eAe} eA_{\sigma} \\
& \cong Ae \otimes_{eAe} eA_{\sigma} \\
& \cong \text{D}(A),
\end{align*}
where (1) is $\gamma'^{-1}\otimes_A \gamma^{-1}$, and it is explicitly defined by
\begin{align*}
\epsilon: \gamma'(ae\otimes_{eAe} eb) \otimes_A \gamma(ce \otimes_{eAe} ed) & \mapsto (ae \otimes_{eAe} eb)\otimes_A (ce \otimes_{eAe} ed) \\
& \mapsto ae \otimes_{eAe} ebce \otimes_{eAe} ed\\
& \mapsto aebce \otimes_{eAe} ed \\
& \mapsto \gamma(aebce \otimes_{eAe} ed),
\end{align*}
for any $a,b,c,d \in A$.

Now let $\text{D}(A)_{\sigma^{-1}}\otimes_A \text{D}(A)\cong \text{D}(A)$ as $A$-bimodules. Taking the dual of this isomorphism gives the $A$-bimodule isomorphism $\text{Hom}_A ({}_\sigma \text{D}(A),A)\cong A$. Then we obtain the following isomorphisms of $A$-bimodules
\[A \cong \text{Hom}_A ({}_\sigma \text{D}(A),A) \cong \text{Hom}_A (\text{D}(A),A)_\sigma.\]
It means that there is a left $A$-module isomorphism $\text{Hom}_A (\text{D}(A),A)\cong A$ and by Definition \ref{theoremGenFrob}, $A$ is gendo-Frobenius.
\end{proof}
Let $m_1$ be the composition of the canonical $A$-bimodule morphism
\[\phi: \text{D}(A)_{\sigma^{-1}}\otimes_k \text{D}(A) \rightarrow \text{D}(A)_{\sigma^{-1}}\otimes_A \text{D}(A)\]
with the isomorphism $\epsilon$ given in the proof of Theorem \ref{PropgFsigma} such that
\[m_1: \text{D}(A)_{\sigma^{-1}}\otimes_k \text{D}(A) \stackrel{\phi}{\rightarrow}  \text{D}(A)_{\sigma^{-1}}\otimes_A \text{D}(A)\stackrel{\epsilon}{\cong} \text{D}(A),\]
where
\begin{align*}
m_1 : \gamma'(ae \otimes_{eAe} eb)\otimes_k \gamma(ce \otimes_{eAe} ed) &\mapsto \gamma'(ae \otimes_{eAe} eb)\otimes_A \gamma(ce \otimes_{eAe} ed)\\
&\mapsto \gamma (aebce \otimes_{eAe} ed).
\end{align*}
Let $m_2: \text{D}(A) \otimes_k \text{D}(A) \rightarrow  \text{D}(A)_{\sigma^{-1}}\otimes_k \text{D}(A)$ be the map which is defined by
\[m_2(\gamma(ae\otimes_{eAe} eb)\otimes_k \gamma(ce\otimes_{eAe} ed))= \gamma'(ae\otimes_{eAe} eb)\otimes_k \gamma(ce\otimes_{eAe} ed),\]
where $\gamma(ae\otimes_{eAe} eb), \gamma(ce\otimes_{eAe} ed) \in \text{D}(A)$ and $\gamma'(ae\otimes_{eAe} eb) \in \text{D}(A)_{\sigma^{-1}}$.\\

{\bf Claim.} The map $m_2$ is an $A$-bimodule morphism.\\

\emph{Proof of Claim.} It is enough to check that
\[m_2(x\gamma(ae\otimes_{eAe} eb)\otimes_k \gamma(ce \otimes_{eAe} ed))=xm_2(\gamma(ae\otimes_{eAe} eb)\otimes_k \gamma(ce \otimes_{eAe} ed))\]
and
\[m_2(\gamma(ae\otimes_{eAe} eb)\otimes_k \gamma(ce \otimes_{eAe} ed)y)= m_2(\gamma(ae\otimes_{eAe} eb)\otimes_k \gamma(ce \otimes_{eAe} ed))y\]
for any $x,y \in A$. We observe that
\begin{align*}
  m_2(x \gamma(ae\otimes_{eAe} eb) \otimes_k \gamma(ce \otimes_{eAe} ed))& =  m_2(\gamma(xae\otimes_{eAe} eb) \otimes_k \gamma(ce \otimes_{eAe} ed))  \\
   & =  \gamma'(xae\otimes_{eAe} eb) \otimes_k \gamma(ce \otimes_{eAe} ed)\\
 xm_2( \gamma(ae\otimes_{eAe} eb) \otimes_k \gamma(ce \otimes_{eAe} ed)) & =  x\gamma'(ae\otimes_{eAe} eb) \otimes_k \gamma(ce \otimes_{eAe} ed) \\
   & =  \gamma'(xae\otimes_{eAe} eb) \otimes_k \gamma(ce \otimes_{eAe} ed).
\end{align*}
Therefore, $m_2(x \gamma(ae\otimes_{eAe} eb)\otimes_k \gamma(ce \otimes_{eAe} ed))= xm_2(\gamma(ae\otimes_{eAe} eb)\otimes_k \gamma(ce \otimes_{eAe} ed))$. Also,
\begin{align*}
  m_2(\gamma(ae\otimes_{eAe} eb) \otimes_k \gamma(ce \otimes_{eAe} ed)y) & =  m_2(\gamma(ae\otimes_{eAe} eb) \otimes_k \gamma(ce \otimes_{eAe} ed\sigma(y)))  \\
   & =  \gamma'(ae\otimes_{eAe} eb) \otimes_k \gamma(ce \otimes_{eAe} ed\sigma(y))\\
  m_2( \gamma(ae\otimes_{eAe} eb) \otimes_k \gamma(ce \otimes_{eAe} ed))y & =  \gamma'(ae\otimes_{eAe} eb) \otimes_k \gamma(ce \otimes_{eAe} ed)y \\
   & =  \gamma'(ae\otimes_{eAe} eb) \otimes_k \gamma(ce \otimes_{eAe} ed\sigma(y)).
\end{align*}
Hence, $m_2(\gamma(ae\otimes_{eAe} eb)\otimes_k \gamma(ce \otimes_{eAe} ed)y)= m_2(\gamma(ae\otimes_{eAe} eb)\otimes_k \gamma(ce \otimes_{eAe} ed))y$. This means that $m_2$ is an $A$-bimodule morphism. \hspace{13.7cm} $\Box$ \\

Let $m$ be the following composition map
\[m : \text{D}(A)\otimes_k \text{D}(A) \stackrel{m_2}{\rightarrow} \text{D}(A)_{\sigma^{-1}}\otimes_k \text{D}(A)\stackrel{m_1}{\rightarrow} \text{D}(A),\]
where
\begin{align*}
  m : \gamma(ae\otimes_{eAe} eb) \otimes_k \gamma(ce\otimes_{eAe} ed) & \mapsto \gamma'(ae\otimes_{eAe} eb) \otimes_k \gamma(ce\otimes_{eAe} ed) \\
  & \mapsto \gamma(aebce\otimes_{eAe} ed). \nonumber
\end{align*}
Dualising $m$ yields an $A$-bimodule morphism
\[\Delta: A \rightarrow  A \otimes_k A\]
such that
\[(f\otimes g)\Delta(x)=m(g\otimes f)(x)\]
for any $f,g$ in D($A$) and $x$ in $A$.
\begin{theorem}\label{thmcomgenFrob} Let $A$ be a gendo-Frobenius algebra. Then
\[\Delta: A \rightarrow A \otimes_k A\]
is a coassociative comultiplication which is an $A$-bimodule morphism.
\end{theorem}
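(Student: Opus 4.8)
The plan is to establish separately that $\Delta$ is a morphism of $A$-bimodules and that it is coassociative, deducing both from the corresponding properties of the map $m\colon \text{D}(A)\otimes_k\text{D}(A)\to\text{D}(A)$ of which $\Delta$ is the dual. Throughout I would exploit the defining relation $(f\otimes g)\Delta(x)=m(g\otimes f)(x)$, recalling that on $\text{D}(A)\otimes_k\text{D}(A)$ and on $A\otimes_k A$ the left $A$-action sits on the first tensor factor and the right $A$-action on the second, while on $\text{D}(A)$ the actions are $(a\cdot h)(x)=h(xa)$ and $(h\cdot a)(x)=h(ax)$.

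For the bimodule property I would first record that $m$ itself is an $A$-bimodule morphism. Since $m=m_1\circ m_2$, it suffices to treat the factors: $m_2$ is a bimodule morphism by the Claim, and $m_1=\epsilon\circ\phi$ is the composite of the canonical projection $\phi$, which is a bimodule morphism by construction, with the bimodule isomorphism $\epsilon$ from the proof of Theorem \ref{PropgFsigma}; hence $m_1$, and therefore $m$, is an $A$-bimodule morphism. From this the bimodule identities for $\Delta$ follow by a direct pairing argument. For the left action one computes
\[
(f\otimes g)\Delta(ax)=m(g\otimes f)(ax)=\big(m(g\otimes f)\cdot a\big)(x)=m\big((g\otimes f)\cdot a\big)(x)=m(g\otimes fa)(x),
\]
and, separately, $(f\otimes g)(a\Delta(x))=\big((fa)\otimes g\big)\Delta(x)=m(g\otimes fa)(x)$, so the two agree for all $f,g$ and $\Delta(ax)=a\Delta(x)$. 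The right-action identity $\Delta(xa)=\Delta(x)a$ is symmetric, now moving the action onto the first tensor factor and using $m\big((ag)\otimes f\big)(x)$ on both sides.

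For coassociativity the key observation is that, under the dualising relation, the identity $(\Delta\otimes\mathrm{id})\circ\Delta=(\mathrm{id}\otimes\Delta)\circ\Delta$ is equivalent to associativity of $m$. Pairing both composites against an arbitrary triple $f\otimes g\otimes h$ and applying the defining relation twice yields
\[
(f\otimes g\otimes h)\big((\Delta\otimes\mathrm{id})\Delta(x)\big)=m\big(h\otimes m(g\otimes f)\big)(x),
\]
\[
(f\otimes g\otimes h)\big((\mathrm{id}\otimes\Delta)\Delta(x)\big)=m\big(m(h\otimes g)\otimes f\big)(x),
\]
so coassociativity amounts exactly to $m\big(h\otimes m(g\otimes f)\big)=m\big(m(h\otimes g)\otimes f\big)$, i.e. to the associativity of $m$. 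This I would then verify on elementary tensors from the explicit formula $m\big(\gamma(ae\otimes_{eAe}eb)\otimes_k\gamma(ce\otimes_{eAe}ed)\big)=\gamma(aebce\otimes_{eAe}ed)$: applied to three elements $\gamma(a_ie\otimes_{eAe}eb_i)$, both the left- and right-associated products collapse to $\gamma(a_1eb_1a_2eb_2a_3e\otimes_{eAe}eb_3)$, since the formula always carries along only the second factor $eb$ of its rightmost argument while multiplying the remaining data together inside $A$.

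The main obstacle is bookkeeping rather than conceptual: one must track the order-reversal in the pairing $(f\otimes g)\Delta(x)=m(g\otimes f)(x)$ — which originates in the natural bimodule isomorphism $\text{D}(U)\otimes_k\text{D}(V)\cong\text{D}(V\otimes_k U)$ that swaps the tensor factors — carefully enough that the reduction of coassociativity lands on the genuine associativity of $m$ and not on a twisted variant. Once the swaps are placed correctly, both the bimodule identities and the associativity of $m$ reduce to the short direct computations above.
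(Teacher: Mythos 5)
Your proposal is correct and takes essentially the same route as the paper: there too, $\Delta$'s properties are deduced by dualising $m$, with associativity of $m$ verified on elementary tensors $\gamma(ae\otimes_{eAe}eb)$ exactly as in your final computation (Lemma \ref{lemmamultp}), and the bimodule property and coassociativity of $\Delta$ obtained through the pairing $(f\otimes g)\Delta(x)=m(g\otimes f)(x)$ (Lemma \ref{lemmagendoFrobcomprop}). The only difference is organisational: where you argue formally from the bimodule property of $m=m_1\circ m_2$ and the equivalence of coassociativity with associativity of $m$, the paper re-runs the explicit element computations, tracking $\sigma$ directly; both handle the order-reversal in the pairing consistently, so no gap arises.
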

The proof of Theorem \ref{thmcomgenFrob} consists of the following two lemmas.
\begin{lemma}\label{lemmamultp} The map $m$ satisfies
\[m(1\otimes m)= m(m\otimes 1)\]
as $k$-morphisms from $\text{D}(A)\otimes_k \text{D}(A)\otimes_k \text{D}(A)$ to $\text{D}(A)$.
\end{lemma}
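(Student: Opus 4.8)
The statement to prove is that the map
\[
m : \text{D}(A)\otimes_k \text{D}(A) \to \text{D}(A)
\]
is associative in the sense that $m(1\otimes m)=m(m\otimes 1)$. Since $\gamma$ is an $A$-bimodule isomorphism, every element of $\text{D}(A)$ can be written as $\gamma(ae\otimes_{eAe} eb)$ for suitable $a,b\in A$, so it suffices to verify the identity on pure tensors $\gamma(a_1e\otimes_{eAe} eb_1)\otimes_k \gamma(a_2e\otimes_{eAe} eb_2)\otimes_k \gamma(a_3e\otimes_{eAe} eb_3)$. The plan is simply to compute both composites on such a tensor using the explicit formula
\[
m\bigl(\gamma(ae\otimes_{eAe} eb)\otimes_k \gamma(ce\otimes_{eAe} ed)\bigr)=\gamma(aebce\otimes_{eAe} ed)
\]
established just before the lemma, and check that the two results agree.

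First I would compute $m(m\otimes 1)$. Applying $m$ to the first two tensor factors gives $\gamma(a_1eb_1a_2e\otimes_{eAe} eb_2)$, and then applying $m$ again against the third factor yields $\gamma(a_1eb_1a_2eb_2a_3e\otimes_{eAe} eb_3)$. Next I would compute $m(1\otimes m)$: applying $m$ to the last two factors gives $\gamma(a_2eb_2a_3e\otimes_{eAe} eb_3)$, and applying $m$ against the first factor gives $\gamma(a_1eb_1a_2eb_2a_3e\otimes_{eAe} eb_3)$. The two expressions coincide, which proves the identity. The only subtlety to record carefully is that in each step the second tensor leg $ee_{\cdots}$ of the first input slides across the tensor product over $eAe$ and multiplies into the first leg of the second input — this is exactly the behaviour built into the definition of $m$ through $m_1$, which factors the balanced tensor product $\otimes_{eAe}$ and the canonical map $\phi$. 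Concretely, $m_1$ sends $\gamma'(ae\otimes_{eAe} eb)\otimes_k\gamma(ce\otimes_{eAe} ed)$ to $\gamma(aebce\otimes_{eAe} ed)$, so the "inner" elements $eb$ and $ce$ fuse to $ebce\in eAe$, and associativity of this fusion is inherited from the associativity of multiplication in $A$ (equivalently, in $eAe$).

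I do not expect a genuine obstacle here, since the computation is formal once the formula for $m$ is in hand; the essential content is that $m$ is really a disguised multiplication on the balanced tensor description of $\text{D}(A)$. The one point requiring care is bookkeeping: I must make sure that $\gamma'$ versus $\gamma$ appear in the correct slots (the first leg carries the $\sigma^{-1}$-twist via $\gamma'$, the others via $\gamma$) so that the intermediate expressions are well-typed as elements of $\text{D}(A)_{\sigma^{-1}}\otimes_k\text{D}(A)$ before $m_1$ is applied, and that the twist by $\sigma$ introduced by $m_2$ does not interfere — but because the formula for $m$ already absorbs both $m_2$ and $m_1$ and is stated purely in terms of $\gamma$, the verification reduces to the associativity of the multiplication $aebce$, which holds. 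Therefore both composites equal $\gamma(a_1eb_1a_2eb_2a_3e\otimes_{eAe} eb_3)$ and the lemma follows.
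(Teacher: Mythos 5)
Your proposal is correct and follows essentially the same route as the paper's own proof: evaluate both composites on spanning elements $\gamma(ae\otimes_{eAe}eb)\otimes_k\gamma(ce\otimes_{eAe}ed)\otimes_k\gamma(xe\otimes_{eAe}ey)$ using the explicit formula $m(\gamma(ae\otimes eb)\otimes_k\gamma(ce\otimes ed))=\gamma(aebce\otimes ed)$, and observe that both sides equal $\gamma(aebcedxe\otimes ey)$ by associativity of multiplication in $A$. The only minor imprecision is your claim that \emph{every} element of $\text{D}(A)$ is of the form $\gamma(ae\otimes_{eAe}eb)$ for a single pure tensor (in general it is a sum of such), but since all maps involved are $k$-linear this does not affect the argument, and the paper makes the same tacit reduction.
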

\begin{proof} The definition of $m$ above implies that
\begin{align*}
  m(1 \otimes m) (\gamma(ae\otimes eb) \otimes_k \gamma(ce\otimes ed) \otimes_k \gamma(xe\otimes ey)) & = m (\gamma(ae\otimes eb) \otimes_k \gamma(cedxe \otimes ey)) \\
   & = \gamma(aebcedxe \otimes ey)\\
  m(m \otimes 1) (\gamma(ae\otimes eb) \otimes_k \gamma(ce\otimes ed) \otimes_k \gamma(xe\otimes ey)) & = m (\gamma(aebce\otimes ed) \otimes_k \gamma(xe \otimes ey)) \\
   & = \gamma(aebcedxe \otimes ey)
\end{align*}
for any $a,b,c,d,x,y \in A$. Therefore, $m(1\otimes m)= m(m\otimes 1)$.
\end{proof}
\begin{remark} We can give an alternative approach to the proof of Lemma \ref{lemmamultp} and also to writing the comultiplication $\Delta: A \rightarrow A \otimes_k A$ by using $Ae \otimes_{eAe} eA_{\sigma}$ instead of $\text{D}(A)$ since $Ae \otimes_{eAe} eA_{\sigma} \cong \text{D}(A)$ as $A$-bimodules (see Lemma \ref{lemmagamma}).
\end{remark}
\begin{lemma}\label{lemmagendoFrobcomprop} Let $\Delta : A \rightarrow A \otimes_k A$ be as above. Then

(i) $\Delta$ is an $A$-bimodule morphism.

(ii) $(1\otimes \Delta)\Delta = (\Delta \otimes 1)\Delta.$
\end{lemma}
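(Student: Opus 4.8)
The plan is to derive both statements by dualising structural properties of the map $m$. First I would record that $m = m_1 \circ m_2$ is itself an $A$-bimodule morphism: the Claim preceding this lemma shows $m_2$ is one, while $m_1 = \epsilon \circ \phi$ is the composite of the canonical bimodule map $\phi$ with the $A$-bimodule isomorphism $\epsilon$ constructed in the proof of Theorem \ref{PropgFsigma}. I would also fix the dual $A$-bimodule structure on $\text{D}(A)$, namely $(a \cdot f)(x) = f(xa)$ and $(f \cdot a)(x) = f(ax)$, so that on $\text{D}(A) \otimes_k \text{D}(A)$ the left action of $A$ lands on the first tensor factor and the right action on the second, and correspondingly $a(u \otimes v)b = (au) \otimes (vb)$ on $A \otimes_k A$.

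For part (i) I would test the two desired identities against an arbitrary elementary tensor $f \otimes g$, writing $\Delta(x) = \sum x_{(1)} \otimes x_{(2)}$ and using the defining relation $(f \otimes g)\Delta(x) = m(g \otimes f)(x)$. For the left action, rewriting $f(a x_{(1)}) = (f \cdot a)(x_{(1)})$ gives $(f \otimes g)(a\,\Delta(x)) = m(g \otimes (f \cdot a))(x)$, while evaluating $m(g \otimes f)$ at $ax$ and invoking right $A$-linearity of $m$ gives $(f \otimes g)\Delta(ax) = m(g \otimes (f \cdot a))(x)$; the two agree. The right action is handled symmetrically, reducing instead to left $A$-linearity of $m$. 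Since $f$ and $g$ are arbitrary, this forces $\Delta(ax) = a\,\Delta(x)$ and $\Delta(xb) = \Delta(x)\,b$. The point worth flagging is that the interchange of $f$ and $g$ built into the definition of $\Delta$ is precisely what keeps the dualisation from swapping the two sides, so that $\Delta$ preserves, rather than reverses, the bimodule structure.

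For part (ii) I would dualise the associativity of $m$ established in Lemma \ref{lemmamultp}. Pairing each of $(1 \otimes \Delta)\Delta(x)$ and $(\Delta \otimes 1)\Delta(x)$ against a triple $f \otimes g \otimes h$ and collapsing the inner tensor factor twice via the defining relation, the first side becomes $m(m \otimes 1)(h \otimes g \otimes f)(x)$ and the second becomes $m(1 \otimes m)(h \otimes g \otimes f)(x)$; Lemma \ref{lemmamultp} identifies these. As the triples $f \otimes g \otimes h$ separate points of $A \otimes_k A \otimes_k A$, coassociativity $(1 \otimes \Delta)\Delta = (\Delta \otimes 1)\Delta$ follows.

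The main obstacle throughout is purely bookkeeping: keeping the left/right conventions on $\text{D}(A)$ consistent and tracking how the order reversal in the definition of $\Delta$ interacts with $k$-duality. Once these conventions are pinned down, each verification reduces to a single application of the appropriate module-morphism property of $m$ for part (i), and of Lemma \ref{lemmamultp} for part (ii), with no further computation required.
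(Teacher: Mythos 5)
Your proof is correct, and it takes a genuinely different---more structural---route than the paper's. You dualise abstract properties of $m$: for (i) the fact that $m = m_1\circ m_2$ is an $A$-bimodule morphism (the Claim preceding the lemma gives this for $m_2$, and $m_1 = \epsilon\circ\phi$ is a composite of $A$-bimodule maps), and for (ii) the associativity $m(1\otimes m) = m(m\otimes 1)$ of Lemma \ref{lemmamultp}; in each case finite-dimensionality lets elementary tensors of functionals separate points, and your bookkeeping of the flip in $(f\otimes g)\Delta(x) = m(g\otimes f)(x)$ is exactly right---it is what matches $\Delta(ax)$ with right $A$-linearity of $m$ and $\Delta(xb)$ with left $A$-linearity, so the dualisation preserves rather than reverses the two sides. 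The paper instead verifies both parts by explicit computation on the spanning set $\gamma(ae\otimes_{eAe} eb)$ of $\text{D}(A)$, using the formula $m(\gamma(ae\otimes eb)\otimes_k \gamma(ce\otimes ed)) = \gamma(aebce\otimes ed)$: for (i) all three of $\Delta(xy)$, $x\Delta(y)$, $\Delta(x)y$ are reduced to $\gamma(ycedae\otimes eb\sigma(x))(1)$, and for (ii) both sides collapse to $\gamma(xeycedae\otimes eb)(u)$; notably, that proof never actually cites the Claim or Lemma \ref{lemmamultp}, in effect re-deriving their content element-wise. Your organisation is logically cleaner---it makes Lemma \ref{lemmamultp} and the Claim do real work in the proof of Theorem \ref{thmcomgenFrob}, and it is independent of the particular model $Ae\otimes_{eAe} eA_{\sigma}$ of $\text{D}(A)$---while the paper's element-level computation buys concreteness: it shows exactly where $\sigma$ enters (via $\gamma(ce\otimes ed)y = \gamma(ce\otimes ed\sigma(y))$) and rehearses the explicit formulas that are reused later, for instance in Proposition \ref{propgendoFrobcomprop}.
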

\begin{proof}
(i) By definition of $\Delta$, we obtain the following equalities
\begin{align*}
  (\gamma(ae\otimes eb)\otimes \gamma(ce\otimes ed))\Delta(xy) & = m(\gamma(ce\otimes ed)\otimes \gamma(ae\otimes eb))(xy) \\
   & = \gamma(cedae \otimes eb)(x1y)=\gamma(ycedae \otimes eb\sigma(x))(1)\\
  (\gamma(ae\otimes eb)\otimes \gamma(ce\otimes ed))x\Delta(y) & = \gamma (ae \otimes eb\sigma(x)) \otimes \gamma(ce \otimes ed)\Delta(y)\\
   &= \gamma(cedae \otimes eb \sigma(x))(y)= \gamma(ycedae \otimes eb\sigma(x))(1)\\
  (\gamma(ae\otimes eb)\otimes \gamma(ce\otimes ed))\Delta(x)y & = (\gamma(ae \otimes eb) \otimes \gamma(yce \otimes ed))\Delta(x) \\
   & =\gamma(ycedae \otimes eb)(x) = \gamma(ycedae \otimes eb\sigma(x))(1)
\end{align*}
for $a,b,c,d,x,y \in A$. Therefore, $\Delta(xy)= x\Delta(y)= \Delta(x)y$, that is, $\Delta$ is an $A$-bimodule morphism.

(ii) Let $\Delta(u)= \sum u_i \otimes v_i$ for $u \in A$. Then
\begin{align*}
  (\gamma(ae\otimes eb)\otimes \gamma(ce \otimes ed) \otimes \gamma(xe \otimes ey))(1\otimes \Delta)\Delta(u)
   & = \sum\gamma(ae\otimes eb)(u_i)(\gamma(ce \otimes ed) \otimes \gamma(xe \otimes ey))\Delta(v_i)\\
   & = \sum\gamma(ae\otimes eb)(u_i) \gamma(xeyce\otimes ed)(v_i)  \\
   & = \gamma(ae \otimes eb) \otimes \gamma(xeyce \otimes ed)\Delta(u)\\
   & = \gamma(xeycedae \otimes eb)(u)\\
   (\gamma(ae\otimes eb)\otimes \gamma(ce \otimes ed) \otimes \gamma(xe \otimes ey))(\Delta \otimes 1)\Delta(u)
   & = \sum \gamma(ae\otimes eb)\otimes \gamma(ce \otimes ed)\Delta(u_i) \gamma(xe\otimes ey)(v_i)\\
   & = \sum \gamma(cedae\otimes eb)(u_i)\gamma(xe \otimes ey)(v_i)  \\
   & = \gamma(cedae \otimes eb) \otimes \gamma(xe \otimes ey)\Delta(u)\\
   & = \gamma(xeycedae \otimes eb)(u)
\end{align*}
This means that $(1\otimes \Delta)\Delta = (\Delta \otimes 1)\Delta$.
\end{proof}
\begin{remark} There are further constructions possible that yield comultiplications on gendo-Frobenius algebras. However, these are lacking crucial properties such as being coassociative.
\end{remark}
\begin{proposition}\label{propgendoFrobcomprop} Let $A$ be a gendo-Frobenius algebra and $\Delta : A \rightarrow A \otimes_k A$ be as above. Then
\[\text{Im}(\Delta)=\{\sum u_i \otimes v_i \mid  \sum u_i x \otimes v_i = \sum u_i \otimes \sigma^{-1}(x) v_i, \hspace{0.2cm} \forall x \in A\}.\]
\end{proposition}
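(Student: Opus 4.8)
The plan is to describe both $\text{Im}(\Delta)$ and the target set $S := \{\sum u_i\otimes v_i \mid \sum u_i x\otimes v_i=\sum u_i\otimes\sigma^{-1}(x)v_i,\ \forall x\in A\}$ as annihilators under the nondegenerate pairing $\langle u\otimes v,\, f\otimes g\rangle = f(u)g(v)$ between $A\otimes_k A$ and $\text{D}(A)\otimes_k\text{D}(A)$, and then to match the two annihilators. Recall that $\Delta$ is defined by $(f\otimes g)\Delta(x)=m(g\otimes f)(x)$; reading this as $\langle\Delta(x),f\otimes g\rangle=\langle x,\,m(g\otimes f)\rangle$ shows that $\Delta$ is, up to the swap $f\otimes g\mapsto g\otimes f$, the $k$-dual of $m$. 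By the elementary identity $\text{Im}(T^{*})=(\ker T)^{\perp}$ for linear maps of finite-dimensional spaces, this gives $\text{Im}(\Delta)=\big(\text{swap}(\ker m)\big)^{\perp}$. Hence it suffices to prove $\text{swap}(\ker m)=S^{\perp}$, for then $\text{Im}(\Delta)=(S^{\perp})^{\perp}=S$.

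Next I would compute $\ker m$ explicitly. Since $m=\epsilon\circ\phi\circ m_2$ with $\epsilon$ and $m_2$ isomorphisms and $\phi$ the canonical projection $\text{D}(A)_{\sigma^{-1}}\otimes_k\text{D}(A)\to\text{D}(A)_{\sigma^{-1}}\otimes_A\text{D}(A)$, and since $m_2$ is the identity on the underlying vector space, $\ker m$ coincides with $\ker\phi$, which is exactly the subspace of tensor-over-$A$ relations. Writing the $A$-bimodule structure on $\text{D}(A)$ as $(a\cdot f)(y)=f(ya)$ and $(f\cdot a)(y)=f(ay)$, and recalling that the right action on $\text{D}(A)_{\sigma^{-1}}$ is $f\cdot\sigma^{-1}(a)$, this subspace is
\[\ker m=\operatorname{span}\{(f\cdot\sigma^{-1}(a))\otimes g-f\otimes(a\cdot g)\mid f,g\in\text{D}(A),\ a\in A\}.\]
As a byproduct, $\phi$ (hence $m$) is surjective, so $\dim\text{Im}(\Delta)=\dim\text{Im}(m)=\dim A$; this is a useful consistency check, since it forces both $\text{Im}(\Delta)$ and $S$ to have dimension $\dim A$ if the claim is to hold.

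Then I would compute $S^{\perp}$. Writing $\alpha_x(u\otimes v)=ux\otimes v$ and $\beta_x(u\otimes v)=u\otimes\sigma^{-1}(x)v$, one has $S=\bigcap_{x\in A}\ker(\alpha_x-\beta_x)$, so $S^{\perp}=\sum_{x}\text{Im}(\alpha_x^{*}-\beta_x^{*})$. A short pairing computation gives the adjoints $\alpha_x^{*}(f\otimes g)=(x\cdot f)\otimes g$ and $\beta_x^{*}(f\otimes g)=f\otimes(g\cdot\sigma^{-1}(x))$, whence
\[S^{\perp}=\operatorname{span}\{(x\cdot f)\otimes g-f\otimes(g\cdot\sigma^{-1}(x))\mid f,g\in\text{D}(A),\ x\in A\}.\]
Applying the swap to the generators of $\ker m$ and relabelling $f\leftrightarrow g$ and $a\to x$ turns them, up to sign, into the generators of $S^{\perp}$; thus $\text{swap}(\ker m)=S^{\perp}$ and the proof concludes by double annihilation.

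I expect the main obstacle to be bookkeeping rather than conceptual: one must keep the $\sigma^{-1}$-twist, the left and right $\text{D}(A)$-actions, and the swap mutually consistent throughout, and in particular justify that $\ker m$ is literally the tensor-relation subspace (which rests on $m_2$ being the identity on underlying spaces and $\epsilon$ being an isomorphism, as established in the proof of Theorem \ref{PropgFsigma}). An alternative I would keep in reserve is to prove the inclusion $\text{Im}(\Delta)\subseteq S$ directly: for $\Delta(u)=\sum u_i\otimes v_i$, pairing $\sum u_ix\otimes v_i$ and $\sum u_i\otimes\sigma^{-1}(x)v_i$ against an arbitrary $f\otimes g$ reduces, via the defining relation of $\Delta$, to the $\otimes_A$-balancing identity $m(g\otimes(x\cdot f))=m((g\cdot\sigma^{-1}(x))\otimes f)$, and one then obtains the reverse inclusion from $\dim\text{Im}(\Delta)=\dim A=\dim S$. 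However, computing $\dim S$ on its own is less direct than the annihilator identity above, which delivers the equality in a single stroke.
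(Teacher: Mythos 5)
Your proof is correct, but it takes a genuinely different route from the paper's. The paper argues in two halves: it first proves $\text{Im}(\Delta)\subseteq\Sigma$ by verifying on generators the balancing identity $m(g\otimes xf)=m(g\sigma^{-1}(x)\otimes f)$ (the same fact underlying your description of $\ker\phi$); then, for the reverse inclusion, it sends each $\theta=\sum u_i\otimes v_i\in\Sigma$ to the map $\overline{\theta}(f)=\sum f(u_i)v_i$, shows $\overline{\theta}\in\text{Hom}_A({}_{\sigma}\text{D}(A),A)$, invokes the bimodule isomorphism $\text{Hom}_A({}_{\sigma}\text{D}(A),A)\cong A$ (obtained by dualising the isomorphism of Theorem \ref{PropgFsigma}), and closes a cycle of injections $\text{Im}(\Delta)\hookrightarrow\Sigma\hookrightarrow\text{Hom}_A({}_{\sigma}\text{D}(A),A)\cong A\rightarrow\text{Im}(\Delta)$ to force equality by dimension count. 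You instead stay entirely inside linear algebra: $\Delta=(m\circ\mathrm{swap})^{*}$, so $\text{Im}(\Delta)=(\mathrm{swap}(\ker m))^{\perp}$; then $\ker m=\ker\phi$ is the subspace of $\otimes_A$-balancing relations, because $\epsilon$ is an isomorphism and $m_2$ is the identity on underlying vector spaces (which is indeed how $\gamma'$ arises from $\gamma$, both having the same underlying $k$-linear map); finally $S^{\perp}$, computed via the adjoints of $\alpha_x$ and $\beta_x$, matches $\mathrm{swap}(\ker m)$ generator by generator, and double annihilation finishes. Your version buys economy and, arguably, rigor: it needs no module-theoretic input beyond the factorisation of $m$ provided by Theorem \ref{PropgFsigma}, and it sidesteps the paper's injectivity argument for $\theta\mapsto\overline{\theta}$, whose justification as written (``$u_i=0$ or $v_i=0$'') is loose, though repairable by taking the $v_i$ linearly independent or by noting that $A\otimes_k A\to\text{Hom}_k(\text{D}(A),A)$ is injective. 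What the paper's route buys is the explicit bimodule identification $\Sigma\cong\text{Hom}_A({}_{\sigma}\text{D}(A),A)\cong A$, which is reused immediately afterwards to compare $\text{Im}(\Delta)$ with $\text{Im}(\tilde{\Delta})$ for other comultiplications $\tilde{\Delta}$; your argument yields only the equality of subspaces, so that corollary would need the identification supplied separately.
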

\begin{proof}
Let $\Sigma = \{\sum u_i \otimes v_i \mid  \sum u_i x \otimes v_i = \sum u_i \otimes \sigma^{-1}(x) v_i, \hspace{0.2cm} \forall x \in A\}$. Let $\Delta(u)= \sum u_i \otimes v_i$, for any $u\in A$. Then for any $f,g \in \text{D}(A)$ and $x\in A$,
\[(f\otimes g)(\sum u_i x \otimes v_i)= (xf \otimes g)\Delta(u) = m(g \otimes x f)(u)\]
\[(f\otimes g)(\sum u_i  \otimes \sigma^{-1}(x) v_i)= (f \otimes g \sigma^{-1}(x))\Delta(u) = m(g \sigma^{-1}(x) \otimes  f)(u).\]
By definition of $m$, there is an equality $ m(g \otimes_k x f) = m(g \sigma^{-1}(x) \otimes_k  f)$. Because, let $f=\gamma(ae\otimes eb)$ and $g=\gamma(ce\otimes ed)$, then
\begin{align*}
  m(g \otimes_k xf) & = m_1 m_2 (\gamma(ce\otimes ed)\otimes_k x\gamma(ae\otimes eb)) = m_1 m_2 (\gamma(ce\otimes ed)\otimes_k \gamma(xae\otimes eb))\\
  & = m_1 (\gamma'(ce\otimes ed)\otimes_k \gamma(xae\otimes eb)) = \gamma(cedxae\otimes eb)\\
  m(g\sigma^{-1}(x) \otimes_k f) & = m_1 m_2 (\gamma(ce\otimes ed)\sigma^{-1}(x) \otimes_k \gamma(ae\otimes eb)) = m_1 m_2 (\gamma(ce\otimes edx)\otimes_k \gamma(ae\otimes eb))\\
  & = m_1 (\gamma'(ce\otimes edx)\otimes_k \gamma(ae\otimes eb)) = \gamma(cedxae\otimes eb).
\end{align*}
Thus $\Delta(u)\in \Sigma$ and so Im$(\Delta)\subseteq \Sigma$.

Conversely, for each $\theta = \sum u_i \otimes v_i \in \Sigma$, there is a $k$-linear map $\text{D}(A)\rightarrow A$, denoted by $\overline{\theta}$, such that $\overline{\theta}(f)= \sum f(u_i) v_i$ for any $f \in \text{D}(A)$. Since for any $x \in A$, $\sum u_i x \otimes v_i = \sum u_i \otimes \sigma^{-1}(x) v_i$, it follows
\[\overline{\theta}(x f) = \sum (x f)(u_i)v_i = \sum f(u_i x) v_i = \sum f(u_i)\sigma^{-1}(x) v_i = \sigma^{-1}(x) \overline{\theta}(f). \]
Then $\overline{\theta}$ is a left $A$-module morphism, that is, $\overline{\theta} \in \text{Hom}_A ({}_{\sigma}\text{D}(A),A) \cong \text{Hom}_A (\text{D}(A),{}_{\sigma^{-1}}A)$. Since $\text{D}(A)_{\sigma^{-1}}\otimes_A \text{D}(A) \cong \text{D}(A)$ as $A$-bimodules, by taking the dual of this isomorphism, we obtain that $\text{Hom}_A (\text{D}(A),{}_{\sigma^{-1}}A)\cong A$ as $A$-bimodules. Therefore, $\text{Hom}_A ({}_{\sigma}\text{D}(A),A) \cong A$ as $A$-bimodules. Now, observe that the map $\xi: \Sigma \rightarrow \text{Hom}_A ({}_{\sigma}\text{D}(A),A)$ which sends $\theta$ to $\overline{\theta}$ is injective. To show that it is enough to prove Ker$\xi = \{0\}$. In fact, $\xi(\theta)=\xi(\sum u_i \otimes v_i)=\overline{\theta}=0$ means that $\overline{\theta}(f)= \sum f(u_i) v_i = 0$ for any $f \in \text{D}(A)$. So we obtain that $u_i=0$ or $v_i = 0$. Therefore, $\theta=0$. Also, since $m$ is surjective, $\Delta$ is injective. Then by using Im$\Delta \subseteq \Sigma$ and previous facts, we obtain the composition of following injective maps
\[\text{Im}(\Delta) \rightarrow \Sigma \rightarrow \text{Hom}_A ({}_{\sigma}\text{D}(A),A) \cong A \rightarrow \text{Im}(\Delta).\]
Therefore, Im$\Delta= \Sigma$.
\end{proof}
Let $A$ be a gendo-Frobenius algebra with comultiplication $\tilde{\Delta}$ which satisfies Lemma \ref{lemmagendoFrobcomprop} (i) and (ii). Suppose that
\[\text{Im}(\tilde{\Delta})=\{\sum u_i \otimes v_i \mid  \sum u_i x \otimes v_i = \sum u_i \otimes \omega^{-1}(x) v_i, \hspace{0.2cm} \forall x \in A\}\]
for an automorphism $\omega$ of $A$. $\tilde{\Delta}$ induces a map $\tilde{m} : \text{D}(A) \otimes_k \text{D}(A) \rightarrow \text{D}(A)$ such that $(f\otimes g)\tilde{\Delta}(a)= \tilde{m}(g\otimes f)(a)$ for any $f, g \in \text{D}(A)$ and $a \in A$. Then $\tilde{m}$ is an $A$-bimodule morphism and it factors through $\text{D}(A)_{\omega^{-1}} \otimes_A \text{D}(A)$. Moreover, $\tilde{m}$ induces an $A$-bimodule isomorphism $\text{D}(A)_{\omega^{-1}} \otimes_A \text{D}(A) \cong \text{D}(A)$. Indeed, by Theorem \ref{PropgFsigma}, $\omega(a)=\sigma(uau^{-1})$, where $u$ is an invertible element of $A$. Then Im$(\tilde{\Delta})= \text{Hom}_A ({}_{\omega} \text{D}(A),A)\cong A$ as $A$-bimodules.
\begin{corollary} Let $A$ be a gendo-Frobenius algebra and $\tilde{\Delta}: A\rightarrow A\otimes_k A$ be as above. Then $\text{Im}(\Delta)\cong \text{Im}(\tilde{\Delta})$ as $A$-bimodules.
\end{corollary}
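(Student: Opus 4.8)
The plan is to reduce both $\text{Im}(\Delta)$ and $\text{Im}(\tilde{\Delta})$ to the regular bimodule ${}_A A_A$ and then conclude by transitivity of $\cong$. Almost all of the substantive work has already been assembled in Proposition \ref{propgendoFrobcomprop} and in the discussion immediately preceding the statement, so the corollary is chiefly a matter of chaining those identifications together.

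First I would treat $\text{Im}(\Delta)$. By Proposition \ref{propgendoFrobcomprop}, $\text{Im}(\Delta) = \Sigma$, the subset of $A \otimes_k A$ cut out by the relation $\sum u_i x \otimes v_i = \sum u_i \otimes \sigma^{-1}(x) v_i$. The proof of that proposition produces the cycle of injective $A$-bimodule maps $\text{Im}(\Delta) \to \Sigma \to \text{Hom}_A({}_{\sigma}\text{D}(A), A) \cong A \to \text{Im}(\Delta)$, where the middle isomorphism comes from dualising the defining isomorphism $\text{D}(A)_{\sigma^{-1}} \otimes_A \text{D}(A) \cong \text{D}(A)$ of Theorem \ref{PropgFsigma}. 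Composing around the cycle forces every map in it to be an isomorphism; in particular $\text{Im}(\Delta) \cong A$ as $A$-bimodules.

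Next I would treat $\text{Im}(\tilde{\Delta})$ using the paragraph preceding the corollary. Since $\tilde{\Delta}$ satisfies Lemma \ref{lemmagendoFrobcomprop}(i) and (ii), it is a coassociative $A$-bimodule morphism, so its image is again of the form $\Sigma$ for some twist, giving an automorphism $\omega$ with $\text{Im}(\tilde{\Delta}) = \{\sum u_i \otimes v_i \mid \sum u_i x \otimes v_i = \sum u_i \otimes \omega^{-1}(x) v_i\}$. The induced map $\tilde{m}$ is a bimodule morphism factoring through $\text{D}(A)_{\omega^{-1}} \otimes_A \text{D}(A)$ and inducing an isomorphism $\text{D}(A)_{\omega^{-1}} \otimes_A \text{D}(A) \cong \text{D}(A)$; the uniqueness clause in Theorem \ref{PropgFsigma} then forces $\omega(a) = \sigma(u a u^{-1})$ for an invertible $u \in A$, so $\omega$ lies in the same inner-automorphism class as $\sigma$. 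Dualising as before yields $\text{Im}(\tilde{\Delta}) = \text{Hom}_A({}_{\omega}\text{D}(A), A) \cong A$ as $A$-bimodules. Combining with the previous paragraph gives $\text{Im}(\Delta) \cong A \cong \text{Im}(\tilde{\Delta})$, which is the assertion.

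The only genuinely non-formal ingredient, and hence the step I would guard most carefully, is the claim that $\text{Im}(\tilde{\Delta}) \cong A$ for an \emph{arbitrary} coassociative bimodule comultiplication. This rests on recognising the image as a twisted Hom-space $\text{Hom}_A({}_{\omega}\text{D}(A), A)$ and on the uniqueness-up-to-inner-automorphism part of Theorem \ref{PropgFsigma}, which pins the twist $\omega$ to the inner-automorphism class of $\sigma$ and thereby guarantees $\text{Hom}_A({}_{\omega}\text{D}(A), A) \cong A$ as a \emph{bimodule}, rather than its being merely abstractly isomorphic to some other bimodule. Once that identification is in place, everything reduces to transitivity of isomorphism.
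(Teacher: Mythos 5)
Your proposal is correct and takes essentially the same route as the paper's (implicit) proof: both arguments identify $\text{Im}(\Delta)\cong \text{Hom}_A({}_{\sigma}\text{D}(A),A)\cong A$ via Proposition \ref{propgendoFrobcomprop}, identify $\text{Im}(\tilde{\Delta})=\text{Hom}_A({}_{\omega}\text{D}(A),A)\cong A$ via the factorisation of $\tilde{m}$ and the uniqueness-up-to-inner-automorphism clause of Theorem \ref{PropgFsigma}, and then conclude by transitivity. The only point to adjust is that the twisted form of $\text{Im}(\tilde{\Delta})$ is part of the standing hypothesis (``$\tilde{\Delta}$ as above''), not a consequence of coassociativity alone --- a subtlety your own closing caveat correctly flags.
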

\begin{example}\label{examplegenFrob2} Let $A$ be the gendo-Frobenius algebra in Example \ref{GenFrobEx2}. $A$ has a $k$-basis $\{e_1, e_2, e_3, e_4, \alpha_1,$ $\alpha_2, \alpha_3, \alpha_4 , \alpha_1\alpha_3, \alpha_2\alpha_4\}$ so $\text{D}(A)$ has the dual basis $\{e_1^*, e_2^*, e_3^*, e_4^*, \alpha_1^*, \alpha_2^*, \alpha_3^*, \alpha_4^* , (\alpha_1\alpha_3)^*, (\alpha_2\alpha_4)^*\}$. We choose $e=e_1+e_2$ since $e_1+e_2$ is a faithful and self-dual idempotent of $A$. The multiplication rule on $\text{D}(A)$ described in this section is given by

\begin{center}
    \begin{tabular}{ | l | l | l | l | l | l | l | l | l | l | l |}
    \hline
    m           & $e_1^*$ & $e_2^*$ & $e_3^*$ &$e_4^*$ & $\alpha_1^*$ & $\alpha_2^*$ & $\alpha_3^*$ & $\alpha_4^*$ & $(\alpha_1\alpha_3)^*$ & $(\alpha_2\alpha_4^*)$ \\ \hline
    $e_1^*$     & 0 & 0 & 0 & 0 & 0 & 0 & 0 & 0 & $e_1^*$ &  0      \\ \hline
    $e_2^*$     & 0 & 0 & 0 & 0 & 0 & 0 & 0 & 0 & 0 & $e_2^*$    \\ \hline
    $e_3^*$     & 0 & 0 & 0 & 0 & 0 & 0 & 0 & 0 & 0 & 0       \\ \hline
    $e_4^*$     & 0 & 0 & 0 & 0 & 0 & 0 & 0 & 0 & 0 & 0      \\ \hline
    $\alpha_1^*$    & 0 & 0 & 0 & 0 & 0 & 0 & $e_3^*$  & 0 & $\alpha_1^*$ & 0  \\ \hline
    $\alpha_2^*$    & 0 & 0 & 0 & 0 & 0 & 0 & 0 & $e_4^*$ & 0 & $\alpha_2^*$  \\ \hline
    $\alpha_3^*$    & 0 & 0 & 0 & 0 & 0 & $e_2^*$ & 0 & 0 & 0 & 0  \\ \hline
    $\alpha_4^* $   & 0 & 0 & 0 & 0 &  $e_1^*$ & 0 & 0 & 0 & 0 & 0   \\ \hline
    $(\alpha_1\alpha_3)^* $ & 0 & $e_2^*$  & 0 & 0 & 0 & 0 & $\alpha_3^*$ & 0 & $(\alpha_1\alpha_3)^*$ & 0 \\ \hline
    $(\alpha_2\alpha_4)^* $ & $e_1^*$ & 0 & 0 & 0 & 0 & 0 & 0 &  $\alpha_4^*$  & 0 & $(\alpha_2\alpha_4)^*$  \\ \hline
    \end{tabular}
\end{center}
By description of $\Delta$, we obtain that
\begin{align*}
\Delta(e_1)&= \alpha_1\alpha_3 \otimes e_1 + \alpha_1 \otimes \alpha_4 + e_1 \otimes \alpha_2\alpha_4\\
\Delta(e_2)&= \alpha_2\alpha_4 \otimes e_2 + \alpha_2 \otimes \alpha_3 + e_2 \otimes \alpha_1\alpha_3\\
\Delta(e_3)&= \alpha_3 \otimes \alpha_1\\
\Delta(e_4)&= \alpha_4 \otimes \alpha_2\\
\Delta(\alpha_1)&= \alpha_1\alpha_3 \otimes \alpha_1\\
\Delta(\alpha_2)&= \alpha_2\alpha_4 \otimes \alpha_2\\
\Delta(\alpha_3)&= \alpha_3 \otimes \alpha_1\alpha_3\\
\Delta(\alpha_4)&= \alpha_4 \otimes \alpha_2\alpha_4\\
\Delta(\alpha_1\alpha_3)&= \alpha_1\alpha_3 \otimes \alpha_1\alpha_3\\
\Delta(\alpha_2\alpha_4)&= \alpha_2\alpha_4 \otimes \alpha_2\alpha_4.
\end{align*}

Let $a \in A$. Then we can write $a=a_1 e_1 + a_2 e_2 + a_3 e_3 + a_4 e_4 +a_5 \alpha_1 + a_6 \alpha_2 + a_7 \alpha_3 + a_8 \alpha_4 + a_9 \alpha_1\alpha_3 + a_{10} \alpha_2\alpha_4$, where $a_i \in k$ for $1 \leq i \leq 10$. The linearity of $\Delta$ gives that
\begin{align*}
\Delta(a) & =a_1 \Delta(e_1) + a_2 \Delta(e_2) + a_3 \Delta(e_3) + a_4 \Delta(e_4) \\
  & + a_5 \Delta(\alpha_1) + a_6 \Delta(\alpha_2) + a_7 \Delta(\alpha_3) + a_8 \Delta(\alpha_4) \\
  & + a_9 \Delta(\alpha_1\alpha_3) + a_{10} \Delta(\alpha_2\alpha_4).
\end{align*}
\end{example}

Observe that the algebra $A$ in Example \ref{examplegenFrob2} is not Frobenius. Therefore, it is natural to ask whether the algebra $A$ has a counit compatible with $\Delta$ or not. Indeed, $(A, \Delta)$ does not have a counit. Proposition \ref{propgenFrobFrobcounit} will explain why $(A, \Delta)$ does not have a counit and it will describe a general situation.
\begin{remark}\label{remarktheta}
Let us consider the following $A$-bimodule isomorphism
\begin{align*}
\text{Hom}_A (\text{D}(A),A_{\sigma}) & \cong \text{Hom}_A (\text{D}(A)_{\sigma^{-1}},A)\\
  & \cong \text{Hom}_A (Ae \otimes_{eAe} eA,A) \\
  & \cong \text{Hom}_{eAe} (eA,eA) \\
  & \cong A
\end{align*}
where the second isomorphism is Hom$_A(\gamma', A)$. Let
$\Theta : \text{D}(A) \rightarrow A_{\sigma}$
be the inverse image of $1\in A$ under the above isomorphism. Then $(\Theta \circ \gamma)(ae\otimes eb)=aeb$ for $a,b \in A$. Actually, $\Theta$ is an $A$-bimodule morphism with $e\Theta=\tau^{-1}$.

The following observation will be used to prove Proposition \ref{propFrobeqv}.\\

From $\tau : eA_{\sigma}\cong \text{D}(Ae)$, we get $\tau': eA \cong \text{D}(Ae)_{\sigma^{-1}}$. Let us now consider the following $A$-bimodule isomorphism
\begin{align*}
 \text{Hom}_A (\text{D}(A)_{\sigma^{-1}},A) & \cong \text{Hom}_A (Ae \otimes_{eAe} eA,A) \\
  & \cong \text{Hom}_{eAe} (eA,eA) \\
  & \cong A
\end{align*}
where the first isomorphism is Hom$_A(\gamma', A)$. Let
$\Theta' : \text{D}(A)_{\sigma^{-1}} \rightarrow A$
be the inverse image of $1\in A$ under the above isomorphism. Then $(\Theta' \circ \gamma')(ae\otimes eb)=aeb$ for $a,b \in A$. Actually, $\Theta'$ is an $A$-bimodule morphism with $e\Theta'=\tau'^{-1}$.
\end{remark}
\begin{lemma}\label{lemmatheta} Let $A$ be a gendo-Frobenius algebra and $m : \text{D}(A) \otimes_k \text{D}(A) \rightarrow \text{D}(A)$ as before. Then
\[\Theta(m(f\otimes g))= \Theta(f)\Theta(g)\]
for any $f,g \in \text{D}(A)$.
\end{lemma}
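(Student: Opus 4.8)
The plan is to reduce the identity to generators of $\text{D}(A)$ and then verify it by a direct computation. Since the map $\gamma\colon Ae \otimes_{eAe} eA_{\sigma} \to \text{D}(A)$ from Lemma \ref{lemmagamma} is a $k$-linear isomorphism and every element of $Ae\otimes_{eAe}eA_{\sigma}$ is a $k$-linear combination of simple tensors $ae \otimes_{eAe} eb$ with $a,b \in A$, every element of $\text{D}(A)$ is a $k$-linear combination of elements of the form $\gamma(ae \otimes_{eAe} eb)$. Both sides of the claimed equality are $k$-bilinear in $(f,g)$: the left-hand side because $m$ is $k$-bilinear and $\Theta$ is $k$-linear, and the right-hand side because $\Theta$ is $k$-linear and the multiplication of $A$ is $k$-bilinear. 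Hence it suffices to check the identity for $f = \gamma(ae \otimes_{eAe} eb)$ and $g = \gamma(ce \otimes_{eAe} ed)$.

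For such generators I would invoke the two explicit formulas already established. The description of $m$ on generators gives $m(\gamma(ae \otimes_{eAe} eb) \otimes_k \gamma(ce \otimes_{eAe} ed)) = \gamma(aebce \otimes_{eAe} ed)$, and the defining property of $\Theta$ from Remark \ref{remarktheta} gives $(\Theta \circ \gamma)(ae \otimes_{eAe} eb) = aeb$. Applying $\Theta$ to the value of $m$, and reading $aebce$ as the generator whose first tensor factor is $(aebc)e \in Ae$ and whose second factor is $ed \in eA$, yields $\Theta(\gamma(aebce \otimes_{eAe} ed)) = aebced$. On the other side, $\Theta(\gamma(ae \otimes_{eAe} eb))\,\Theta(\gamma(ce \otimes_{eAe} ed)) = (aeb)(ced) = aebced$, where the product is taken in the underlying algebra $A$ (viewing $A_{\sigma}$ as $A$ as a $k$-vector space). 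The two expressions coincide, which proves the lemma.

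There is no genuine obstacle here; the content is bookkeeping. The only points requiring care are matching the factors and the positions of the idempotent $e$ correctly so that both computations land on $aebced$, and confirming that both sides are honestly $k$-bilinear so that the reduction to generators is legitimate. It is worth recording that the lemma says precisely that $\Theta$ is multiplicative with respect to the product $m$ on $\text{D}(A)$ and the ordinary product on $A$, a fact that will be the crux of the argument identifying the Frobenius case via a counit.
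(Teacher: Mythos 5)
Your proposal is correct and follows essentially the same route as the paper's proof: reduce to generators $f=\gamma(ae\otimes_{eAe} eb)$, $g=\gamma(ce\otimes_{eAe} ed)$ and compute both sides to $aebced$ using the explicit formula for $m$ and the identity $(\Theta\circ\gamma)(ae\otimes_{eAe} eb)=aeb$ from Remark \ref{remarktheta}. The only difference is that you make the bilinearity/spanning justification for the reduction explicit, which the paper leaves implicit.
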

\begin{proof} Let $f=\gamma(ae\otimes eb)$ and $g=\gamma(ce\otimes ed)$. Then observe that
\[(\Theta\circ m )(\gamma(ae\otimes eb)\otimes \gamma(ce\otimes ed))= \Theta(\gamma(aebce\otimes ed))=aebced=(aeb)(ced)\]
\[\Theta(\gamma(ae\otimes eb))\Theta(\gamma(ce\otimes ed))=(aeb)(ced).\]
\end{proof}
We now compare the comultiplication $\Delta: A \rightarrow A\otimes_k A$ constructed in this article and the comultiplication $\alpha: A \rightarrow A\otimes_k A$ given by Abrams (Theorem \ref{thm1}) by assuming that $A$ is Frobenius.\\

We keep the notations introduced in this section. If $A$ is Frobenius, we choose $e=1_A$ and have the $A$-bimodule isomorphism $\tau : A_{\sigma} \cong \text{D}(A)$ such that $\sigma$ is a Nakayama automorphism of $A$.
\begin{proposition}\label{propFrobeqv} Let $A$ be a Frobenius algebra with the left $A$-module isomorphism $\lambda_L : A \cong \text{D}(A)$ which defines an isomorphism $\lambda_L : A_{\sigma}\cong \text{D}(A)$ of $A$-bimodules, where $\sigma$ is a Nakayama automorphism of $A$. Suppose that $\lambda_L= \tau$. Then $\alpha$ is equal to $\Delta$.
\end{proposition}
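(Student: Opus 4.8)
The plan is to reduce the equality of the two maps $A \to A \otimes_k A$ to a single point and to route the argument through the dual multiplication on $\text{D}(A)$. Since $A$ is Frobenius we take $e = 1_A$, so that $eAe = A$ and $Ae = eA = A$, and the bimodule $Ae \otimes_{eAe} eA_{\sigma}$ is identified with $A_{\sigma}$. Under this identification the map $\gamma$ of Lemma \ref{lemmagamma} becomes $\tau$ and the map $\Theta$ of Remark \ref{remarktheta} becomes $\tau^{-1}$; the standing hypothesis $\lambda_L = \tau$ therefore gives $\Theta = \lambda_L^{-1}$. Both $\Delta$ (Theorem \ref{thmcomgenFrob}) and $\alpha$ (Theorem \ref{thm1}) are $A$-bimodule morphisms $A \to A \otimes_k A$, and any such morphism $h$ satisfies $h(x) = x \cdot h(1_A) = h(1_A) \cdot x$, so it is determined by its value $h(1_A)$. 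Hence it suffices to prove $\Delta(1_A) = \alpha(1_A)$.

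First I would identify the multiplication on $\text{D}(A)$ to which $\Delta$ is dual. By Lemma \ref{lemmatheta} the map $\Theta$ is multiplicative for $m$, i.e.\ $\Theta(m(f \otimes g)) = \Theta(f)\Theta(g)$; combined with $\Theta = \lambda_L^{-1}$ this shows that $\lambda_L^{-1}$ is a $k$-algebra isomorphism $(\text{D}(A), m) \to A$, so that $m(f \otimes g) = \lambda_L\bigl(\lambda_L^{-1}(f)\,\lambda_L^{-1}(g)\bigr)$. In other words $m$ is exactly the product of $A$ transported to $\text{D}(A)$ along $\lambda_L$. Feeding this into the defining relation $(f \otimes g)\Delta(x) = m(g \otimes f)(x)$, writing $\varepsilon = \lambda_L(1_A)$ for the Frobenius form and using that $\lambda_L(w)(z) = \varepsilon(zw)$ together with the Nakayama identity $\varepsilon(ab) = \varepsilon(b\,\sigma(a))$ (valid because $\sigma$ is a Nakayama automorphism), a short computation evaluates $\Delta(1_A)$ explicitly and exhibits it as the canonical Casimir (copairing) element determined by the nondegenerate form $\varepsilon$.

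On the other side, Abrams' comultiplication is $\alpha = (\lambda_L^{-1} \otimes \lambda_L^{-1}) \circ \mu^* \circ \lambda_L$, which by construction is dual to the same transported product: equivalently $(\lambda_L \otimes \lambda_L)\,\alpha = \mu^* \circ \lambda_L$, and unwinding the right-hand side presents $\alpha(1_A)$ as the same Casimir element. Matching the two expressions gives $\Delta(1_A) = \alpha(1_A)$, whence $\Delta = \alpha$ by the reduction of the first paragraph. The step demanding the most care is the bookkeeping of the dualisation conventions: the relation defining $\Delta$ from $m$ carries a transposition of the two tensor factors ($f \otimes g \leftrightarrow g \otimes f$), while $\mu^*$ carries the order of multiplication, so one must track these orders through the identification $\text{D}(A \otimes_k A) \cong \text{D}(A) \otimes_k \text{D}(A)$ and through the twist by $\sigma$ to confirm that the two copairings genuinely coincide rather than being interchanged. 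Verifying $\Theta = \lambda_L^{-1}$ precisely under the hypothesis $\lambda_L = \tau$, and that the Nakayama twist cancels correctly, is the crux of the argument.
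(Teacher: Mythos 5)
Your treatment of the multiplication $m$ is correct and is genuinely slicker than the paper's own argument. The paper does not use Lemma \ref{lemmatheta} at this point; instead it factors $\varphi_L := \lambda_L \circ \mu \circ (\lambda_L^{-1}\otimes\lambda_L^{-1})$ through $A\otimes_A A$ and checks commutativity of three squares relating $m_2$, $\phi$ and $\epsilon$ to $\mu'$ and $\lambda_L'$, using the explicit presentation $\varepsilon=\gamma(x\otimes_A y)$ with $xy=1$. Your observation that for $e=1_A$ the map $\Theta$ of Remark \ref{remarktheta} equals $\tau^{-1}=\lambda_L^{-1}$, so that Lemma \ref{lemmatheta} immediately forces $m(f\otimes g)=\lambda_L\bigl(\lambda_L^{-1}(f)\,\lambda_L^{-1}(g)\bigr)=\varphi_L(f\otimes g)$, replaces that entire diagram chase; up to this point your argument is complete, and it is a real simplification.

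The gap is in the endgame. You compare $\Delta$ directly with the left-handed formula $\alpha_L=(\lambda_L^{-1}\otimes\lambda_L^{-1})\circ\mu^*\circ\lambda_L$ and assert that $\Delta(1_A)$ and $\alpha(1_A)$ both come out as ``the same Casimir element,'' treating the factor transposition as bookkeeping to be confirmed. But the transposition does not cancel on its own. Fix dual bases $\{u_i\},\{v_i\}$ of $A$ with $\varepsilon(u_iv_j)=\delta_{ij}$ and any one identification of $\text{D}(A\otimes_k A)$ with $\text{D}(A)\otimes_k\text{D}(A)$. Then the defining relation $(f\otimes g)\Delta(x)=m(g\otimes f)(x)$ together with $m=\varphi_L$ forces $\Delta(1_A)=\sum_i v_i\otimes u_i$, while unwinding $\mu^*$ in $\alpha_L$ under the same identification gives $\sum_i u_i\otimes v_i$: the Casimir element and its flip. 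These coincide if and only if $A$ is symmetric, so for a non-symmetric Frobenius algebra (for instance $B$ in Example \ref{GenFrobEx2}) your matching step would appear to fail. What actually absorbs the flip --- and what the paper invokes --- is Abrams' input on the right-handed comultiplication: the relation $\varphi_L(a\cdot\varepsilon\otimes b\cdot\varepsilon)=(b\cdot\varepsilon\otimes a\cdot\varepsilon)\circ\alpha_R$ carries exactly the same transposition as the definition of $\Delta$ from $m$, so $m=\varphi_L$ yields $\Delta=\alpha_R$ at once, and Abrams' theorem $\alpha_L=\alpha_R=\alpha$ then finishes. Your proof becomes complete (and shorter than the paper's) once the Casimir-matching against $\alpha_L$ is replaced by this comparison with $\alpha_R$ plus the citation $\alpha_L=\alpha_R$; without that input, the final step is not mere convention-tracking but an identity that is false under one of the two equally natural conventions.
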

\begin{proof} Let $A$ be Frobenius and $\tau : A_{\sigma} \cong \text{D}(A)$ be the $A$-bimodule isomorphism. We can consider $\tau$ as $\tau' : A \rightarrow \text{D}(A)_{\sigma^{-1}}$ such that $\tau(a)=\tau'(a)$ for any $a\in A$. Therefore, $\lambda_L(a)=\tau'(a)$ for any $a\in A$. Moreover, there is an $A$-bimodule isomorphism $\gamma : A \otimes_A A_{\sigma} \cong \text{D}(A)$ by Lemma \ref{lemmagamma} and so $\gamma' : A \otimes_A A \cong \text{D}(A)_{\sigma^{-1}}$. By Remark \ref{remarktheta}, we have an $A$-bimodule isomorphism $\Theta' : \text{D}(A)_{\sigma^{-1}} \rightarrow A $ with $\Theta' = \tau'^{-1}$. By following the same remark, we write $\tau'^{-1}(\gamma'(x \otimes y))= xy$ for any $x,y \in A$.

Since the Frobenius form $\varepsilon$ of $A$ is equal to $\lambda_L(1_A)$, all elements of $\text{D}(A)$ are of the form $a \cdot \varepsilon$ for any $a \in A$. The left $A$-module isomorphism $\lambda_L : A \cong \text{D}(A)$ allows us to define a multiplication $\varphi_L:= \lambda_L \circ \mu \circ (\lambda_L^{-1}\otimes \lambda_L^{-1})$ such that $\varphi_L (a\cdot \varepsilon \otimes b \cdot \varepsilon) = (b \cdot \varepsilon \otimes a\cdot \varepsilon) \circ \alpha_R = ab \cdot \varepsilon$.

Let $\vartheta : A \otimes_A A \cong A$ be the $A$-bimodule isomorphism such that $\vartheta (a \otimes_A b)=ab$ and $\mu' : A \otimes_k A \rightarrow A\otimes_A A$ be the map such that $\mu'(a\otimes_k b)= a\otimes_A b$ for any $a,b\in A$. Suppose that $\lambda_L' := \lambda_L \circ \vartheta$ and $\varphi_L' := \lambda_L' \circ \mu' \circ (\lambda_L^{-1} \otimes \lambda_L^{-1})$. Then observe the following
$$\xymatrixrowsep{0.1in}
\xymatrixcolsep{3pc}\xymatrix{
& \varphi_L':  \text{D}(A) \otimes_k \text{D}(A) \ar[r]^-{\lambda_L^{-1}\otimes \lambda_L^{-1}} & A \otimes_k A  \ar[r]^-{\mu'} & A \otimes_A A  \ar[r]^-{\lambda_L'} & \text{D}(A) \\
 & a\cdot \varepsilon \otimes_k b\cdot \varepsilon   \ar@{|->}[r] & a \otimes_k b   \ar@{|->}[r] & a \otimes_A b   \ar@{|->}[r] &  ab\cdot \varepsilon
   }$$
Therefore, $\varphi_L = \varphi_L'$.

Observe that there are isomophisms of left $A$-modules $\tau' \otimes_k \lambda_L : A \otimes_k A \cong \text{D}(A)_{\sigma^{-1}}\otimes_k \text{D}(A)$ and $\tau' \otimes_A \lambda_L : A \otimes_A A \cong \text{D}(A)_{\sigma^{-1}}\otimes_A \text{D}(A)$. We now observe the following diagram
$$\begin{xy}
\xymatrixrowsep{0.2in}
\xymatrixcolsep{0.2in}
  \xymatrix{
  & \text{D}(A)\otimes_k \text{D}(A) \ar[r]^-{\lambda_L^{-1} \otimes_k \lambda_L^{-1}} \ar[dr]_{m_2} & A\otimes_k A \ar[r]^{\mu'} \ar[d]^{\tau' \otimes_k \lambda_L} & A\otimes_A A \ar[r]^{\lambda_L'} \ar[d]_{\tau' \otimes_A \lambda_L} & \text{D}(A)\\
  & &\text{D}(A)_{\sigma^{-1}}\otimes_k \text{D}(A)\ar[r]^{\phi} & \text{D}(A)_{\sigma^{-1}}\otimes_A \text{D}(A) \ar[ur]_{\epsilon} &
    }
\end{xy}$$

Since $\gamma: A\otimes_A A_{\sigma} \cong \text{D}(A)$ as $A$-bimodules and $\varepsilon \in \text{D}(A)$, we can write $\varepsilon = \gamma(x \otimes_A y)$ for suitable $x,y \in A$. Since $\text{D}(A)=\text{D}(A)_{\sigma^{-1}}$ as $k$-vector spaces, we can consider $\varepsilon$ as $\varepsilon=\gamma'(x\otimes_A y)$ when we need to use it. Then any $a \cdot \varepsilon$ of $\text{D}(A)$ can be written as $a \cdot \varepsilon = \gamma(ax \otimes_A y)$ and any $a \cdot \varepsilon$ of $\text{D}(A)_{\sigma^{-1}}$ can be written as $a \cdot \varepsilon = \gamma'(ax \otimes_A y)$. Therefore, $ \lambda_L^{-1}(\gamma(ax \otimes_A y))= \lambda_L^{-1}(a\cdot \varepsilon)= a$. Then $\tau'^{-1} (\gamma'(ax \otimes_A y))=axy =a $ by definition of $\tau'^{-1}$ given above. Since $A$ is faithful $A$-module, $xy=1$. Moreover, $(\tau' \otimes_k \lambda_L) (a \otimes_k b) = \gamma'(ax \otimes_A y) \otimes_k \gamma(bx \otimes_A y)$  and $(\tau' \otimes_A \lambda_L) (a \otimes_A b) = \gamma'(ax \otimes_A y) \otimes_A \gamma(bx \otimes_A y)$. Recall that $m=\epsilon \circ \phi \circ m_2$.

Then by using the above information, first observe that
\begin{align*}
  (\tau' \otimes_k \lambda_L) \circ (\lambda_L^{-1} \otimes_k \lambda_L^{-1})(a \cdot \varepsilon \otimes_k b \cdot \varepsilon) & = (\tau' \otimes_k \lambda_L)(a \otimes_k b) \\
   & = \gamma'(ax \otimes_A y) \otimes_k \gamma(bx \otimes_A y) \\
  m_2 (a \cdot \varepsilon \otimes_k b \cdot \varepsilon) & = m_2 (\gamma(ax \otimes_A y) \otimes_k \gamma(bx \otimes_A y)) \\
   & = \gamma'(ax \otimes_A y) \otimes_k \gamma(bx \otimes_A y).
\end{align*}
It means that left part of the above diagram is commutative.

Also, we see that
\begin{align*}
 (\tau' \otimes_A \lambda_L) \circ \mu' (a \otimes_k b) & = (\tau' \otimes_A \lambda_L) (a \otimes_A b)   \\
   & = \gamma'(ax \otimes_A y) \otimes_A \gamma(bx \otimes_A y)\\
  \phi \circ (\tau' \otimes_k \lambda_L)(a\otimes_k b) & =  \phi (\gamma'(ax \otimes_A y) \otimes_k \gamma(bx \otimes_A y)) \\
   & = \gamma'(ax \otimes_A y) \otimes_A \gamma(bx \otimes_A y).
\end{align*}
Hence, middle part of the diagram is commutative.

Moreover, we have
\begin{align*}
  \epsilon \circ (\tau' \otimes_A \lambda_L)(a \otimes_A b) & = \epsilon (\gamma'(ax \otimes_A y) \otimes_A \gamma(bx \otimes_A y)) \\
  & = \gamma(axybx \otimes_A y) \\
  & = \gamma(abx \otimes_A y) \\
  & = ab \cdot \varepsilon \\
  \lambda_L'(a\otimes_A b) & = ab \cdot \varepsilon.
\end{align*}
Therefore, right part of the diagram is commutative. This means that $\varphi_L'=m$ and so $\varphi_L=m$. Then dualising gives that $\alpha_R=\Delta$.

There is also a comultiplication $\alpha_L$ which is a map of left $A$-modules and in \cite{Abrams}, Abrams proved that $\alpha_L=\alpha_R$ and defined $\alpha:=\alpha_L=\alpha_R$. Hence, we obtain that $\alpha=\Delta$.
\end{proof}
The above proposition shows that the comultiplication constructed by Abrams (Theorem \ref{thm1}) and the comultiplication constructed in this article are equal when the algebra $A$ is Frobenius.
\begin{proposition}\label{propgenFrobFrobcounit} Let $A$ be a gendo-Frobenius algebra with the comultiplication $\Delta: A \rightarrow A \otimes_k A$. Then $(A, \Delta)$ has a counit if and only if $A$ is Frobenius.
\end{proposition}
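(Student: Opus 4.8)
The plan is to recast the counit condition as a purely ring-theoretic statement about the multiplication $m$ and then exploit the multiplicative map $\Theta$ of Lemma \ref{lemmatheta}. First I would translate ``$(A,\Delta)$ has a counit'' into the language of $m$. Writing $\Delta(x)=\sum x'\otimes x''$ and testing the two counit axioms $(\varepsilon\otimes 1)\Delta=\mathrm{id}=(1\otimes\varepsilon)\Delta$ against an arbitrary $g\in\text{D}(A)$, the defining relation $(f\otimes g)\Delta(x)=m(g\otimes f)(x)$ converts them into $m(g\otimes\varepsilon)=g$ and $m(\varepsilon\otimes g)=g$ for all $g$. Hence a counit $\varepsilon$ for $\Delta$ is precisely a two-sided identity element for the associative (Lemma \ref{lemmamultp}) algebra $(\text{D}(A),m)$, and the whole proposition reduces to the statement that $(\text{D}(A),m)$ is unital if and only if $A$ is Frobenius.

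For the implication that a Frobenius structure yields a counit, I would simply invoke Proposition \ref{propFrobeqv}: choosing $e=1_A$, which is legitimate since $1_A$ is a faithful self-dual idempotent of a Frobenius algebra, gives $\Delta=\alpha$, and Abrams' comultiplication $\alpha$ carries a counit, namely the Frobenius form, by Theorem \ref{thm1}.

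The substantive direction is the converse. Assume $\varepsilon$ is a counit, so by the first paragraph $\varepsilon$ is a two-sided unit of $(\text{D}(A),m)$. Applying $\Theta$ and using $(\Theta\circ\gamma)(ae\otimes eb)=aeb$ from Remark \ref{remarktheta}, one gets $\text{Im}(\Theta)=AeA$, and by Lemma \ref{lemmatheta} the element $u:=\Theta(\varepsilon)$ is a two-sided identity for the two-sided ideal $AeA$ of $A$. A short computation then shows $u$ is a central idempotent: $u^2=u$ because $u\in AeA$, and for any $a\in A$ associativity gives $u(au)=(ua)u$, while $au,ua\in AeA$ force the left-hand side to equal $au$ and the right-hand side to equal $ua$, so $au=ua$. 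Consequently $AeA=uA$, and $1-u$ is a central idempotent annihilating $Ae$, since every $ae\in Ae\subseteq AeA$ satisfies $uae=ae$ and hence $(1-u)ae=0$. Here the faithfulness of $e$ enters: as $Ae$ is a faithful left $A$-module, $(1-u)Ae=0$ forces $1-u=0$, whence $AeA=A$.

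Finally, $AeA=A$ makes $\Theta:\text{D}(A)\to A_\sigma$ surjective, and the dimension count $\dim_k\text{D}(A)=\dim_k A$ upgrades it to a bijection. Since $\Theta$ is an $A$-bimodule morphism (Remark \ref{remarktheta}), this is an $A$-bimodule isomorphism $\text{D}(A)\cong A_\sigma$; forgetting the right action gives $A_\sigma\cong A$ as left $A$-modules, so $\text{D}(A)\cong A$ as left $A$-modules and $A$ is Frobenius by Definition \ref{defnFrob}(ii). The hard part will be the central-idempotent step together with the correct use of the faithfulness of $e$ to pass from ``$AeA$ carries its own identity'' to ``$AeA=A$''; the remaining steps are either a formal dualisation or a direct appeal to the cited results.
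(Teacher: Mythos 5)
Your proposal is correct and takes essentially the same approach as the paper: identify a counit with a two-sided unit $\delta$ of $(\text{D}(A),m)$, push it to $u=\Theta(\delta)$ via Lemma \ref{lemmatheta}, use faithfulness to force $u=1$, conclude $\Theta$ is surjective and hence an isomorphism by dimension count, and handle the converse by Proposition \ref{propFrobeqv} and Theorem \ref{thm1}. The only deviation is your central-idempotent detour, which is superfluous: from $uaeb=aeb$ one may simply take $b=1$ and invoke faithfulness of $Ae$ (the paper equivalently uses faithfulness of $AeA$) to get $u=1$ directly.
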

\begin{proof} Let $\delta \in \text{D}(A)$ be a counit of $(A,\Delta)$. Then $m(\delta \otimes f)(a)= (f\otimes \delta)\Delta(a)= f(1\otimes \delta)\Delta(a)=f(a)$, and similarly $m(f\otimes \delta)(a)=(\delta\otimes f)\Delta(a)=f(a)$ for any $a\in A$. Therefore, $\delta$ is a unit of $(\text{D}(A),m)$. Now, let $u$ be the image of $\delta$ under $\Theta: \text{D}(A)\rightarrow A_{\sigma}$. Then $\Theta m(\delta \otimes \gamma (ae\otimes eb))= \Theta (\gamma(ae\otimes eb))$. So, we obtain that $uaeb=aeb$ for any $a,b \in A$ by Lemma \ref{lemmatheta}. Hence, we obtain that $u=1$ since $AeA$ is a faithful left $A$-module. As a result, $\Theta$ is surjective as an $A$-bimodule morphism and thus an isomorphism by comparing dimensions. So $A$ is Frobenius. In fact, $\sigma$ is a Nakayama automorphism of $A$.

Conversely, let $A$ be Frobenius. Then, by Theorem \ref{thm1} and Proposition \ref{propFrobeqv}, $(A,\Delta)$ has a counit.
\end{proof}
In particular, the case $A$ is Frobenius, which is proved by Abrams \cite{Abrams}, is obtained as a special case of Theorem \ref{thmcomgenFrob} and Proposition \ref{propgenFrobFrobcounit}. In addition, Proposition \ref{propgendoFrobcomprop} is specialised to Frobenius algebras.
\begin{corollary} Let $A$ be a Frobenius algebra. Then it has a coassociative counital comultiplication $\Delta: A \rightarrow A \otimes_k A$ which is an $A$-bimodule morphism such that
\[\text{Im}(\Delta)=\{\sum u_i \otimes v_i \mid  \sum u_i x \otimes v_i = \sum u_i \otimes \nu_A^{-1}(x) v_i, \hspace{0.2cm} \forall x \in A\},\]
where $\nu_A$ is a Nakayama automorphism of $A$.
\end{corollary}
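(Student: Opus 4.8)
The plan is to obtain this corollary as a direct specialisation of the general gendo-Frobenius results, exploiting the fact that a Frobenius algebra is gendo-Frobenius via the choice $e = 1_A$. By the remark following Definition \ref{theoremGenFrob}, every Frobenius algebra is gendo-Frobenius, and $1_A$ is a faithful and self-dual idempotent: faithfulness of $A = A\cdot 1_A = 1_A\cdot A$ is immediate, and self-duality of $1_A$ is, by the observation in Section 2, equivalent to $A$ being Frobenius. Hence the entire construction of Section 4 applies verbatim with $e = 1_A$, and $eAe = A$.

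First I would invoke Theorem \ref{thmcomgenFrob} to produce the coassociative comultiplication $\Delta : A \to A \otimes_k A$ that is an $A$-bimodule morphism; this already yields every structural property asserted except the counit. Next, because $A$ is assumed Frobenius, Proposition \ref{propgenFrobFrobcounit} supplies a counit for $(A, \Delta)$, so $\Delta$ is counital. Then Proposition \ref{propgendoFrobcomprop} gives the image as $\text{Im}(\Delta) = \{\sum u_i \otimes v_i \mid \sum u_i x \otimes v_i = \sum u_i \otimes \sigma^{-1}(x) v_i,\ \forall x \in A\}$, where $\sigma$ is the automorphism furnished by Lemma \ref{lemmagamma}. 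At this stage the only discrepancy with the claimed statement is the appearance of $\sigma$ in place of $\nu_A$.

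The sole genuine step is therefore to identify $\sigma$ with a Nakayama automorphism of $A$. Here I would specialise Lemma \ref{lemmagamma}: its defining isomorphism $Ae \otimes_{eAe} eA_\sigma \cong \text{D}(A)$ reduces, for $e = 1_A$, to $A \otimes_A A_\sigma \cong A_\sigma \cong \text{D}(A)$ as $A$-bimodules, which by Definition \ref{defnNakAut} says precisely that $\sigma$ is a Nakayama automorphism of $A$. The same conclusion can alternatively be read off from Proposition \ref{propsigma}(iii), whose restriction-to-$eAe$ clause becomes a statement about all of $A = eAe$ when $e = 1_A$. Setting $\nu_A := \sigma$ then turns the image description of Proposition \ref{propgendoFrobcomprop} into exactly the asserted formula. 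I expect no real obstacle: the substance of the corollary is already contained in Theorem \ref{thmcomgenFrob} and Propositions \ref{propgenFrobFrobcounit} and \ref{propgendoFrobcomprop}, and the remaining work is merely to track how $\sigma$ degenerates to $\nu_A$ under the choice $e = 1_A$.
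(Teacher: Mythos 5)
Your proposal is correct and takes essentially the same route as the paper, which obtains this corollary precisely by specialising Theorem \ref{thmcomgenFrob}, Proposition \ref{propgenFrobFrobcounit} and Proposition \ref{propgendoFrobcomprop} to the choice $e = 1_A$, identifying $\sigma$ as a Nakayama automorphism via Lemma \ref{lemmagamma} (for $e=1_A$ it gives $A_{\sigma}\cong \text{D}(A)$ as $A$-bimodules), exactly the identification the paper records just before Proposition \ref{propFrobeqv}. One minor caveat: your alternative identification via Proposition \ref{propsigma}(iii) is only available when $e$ is basic, i.e.\ when $A$ itself is basic, so the argument through Lemma \ref{lemmagamma} should be kept as the primary one.
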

Moreover, the case $A$ is gendo-symmetric, which is proved by Fang and Koenig (Theorem 2.4 \& Lemma 2.6, \cite{Koenig1}), is obtained as a special case of Theorem \ref{thmcomgenFrob} and Proposition \ref{propgendoFrobcomprop}.
\begin{corollary} Let $A$ be a gendo-symmetric algebra. Then it has a coassociative comultiplication $\Delta: A \rightarrow A \otimes_k A$ which is an $A$-bimodule morphism such that
\[\text{Im}(\Delta)=\{\sum u_i \otimes v_i \mid  \sum u_i x \otimes v_i = \sum u_i \otimes x v_i, \hspace{0.2cm} \forall x \in A\}.\]
\end{corollary}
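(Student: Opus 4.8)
The plan is to obtain this corollary by specialising Theorem \ref{thmcomgenFrob} and Proposition \ref{propgendoFrobcomprop} to the case $\sigma = \text{id}_A$, which is exactly what the gendo-symmetric hypothesis supplies.

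First I would observe that a gendo-symmetric algebra is gendo-Frobenius with the trivial twist. Indeed, Definition \ref{defnGenSym}(iii) states that $\text{D}(A) \otimes_A \text{D}(A) \cong \text{D}(A)$ as $A$-bimodules. This is precisely the condition appearing in Theorem \ref{PropgFsigma} for the choice $\sigma = \text{id}_A$, since in that case $\text{D}(A)_{\sigma^{-1}} = \text{D}(A)$. Hence $A$ is gendo-Frobenius, and the automorphism $\sigma$ --- which Theorem \ref{PropgFsigma} determines only up to an inner automorphism --- may be taken to be the identity throughout the construction of $\Delta$.

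Next I would feed this normalisation into the construction of the comultiplication. By Theorem \ref{thmcomgenFrob}, the resulting map $\Delta : A \to A \otimes_k A$ is a coassociative $A$-bimodule comultiplication, and by Proposition \ref{propgendoFrobcomprop} its image equals $\{\sum u_i \otimes v_i \mid \sum u_i x \otimes v_i = \sum u_i \otimes \sigma^{-1}(x) v_i,\ \forall x \in A\}$. Setting $\sigma = \text{id}_A$ replaces $\sigma^{-1}(x)$ by $x$, yielding exactly the image claimed in the corollary; this recovers the description of Fang and Koenig.

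The only delicate point is the compatibility underlying the first paragraph: one must check that the bimodule isomorphism of Definition \ref{defnGenSym}(iii) is realised by the idempotent-level data $\tau : eA_\sigma \cong \text{D}(Ae)$ used to build $\gamma$ in Lemma \ref{lemmagamma} with $\sigma = \text{id}_A$. Concretely, for a gendo-symmetric algebra the corner algebra $eAe$ is symmetric, so $\nu_{eAe}$ is inner and can be taken to be the identity; Proposition \ref{propsigma}(iii) then carries the normalisation $\sigma = \text{id}_A$ through the entire construction. Once this is recorded, the corollary follows by direct substitution, with no further computation required.
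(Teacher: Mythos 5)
Your proposal is correct and takes essentially the same approach as the paper: the paper obtains this corollary precisely by specialising Theorem \ref{thmcomgenFrob} and Proposition \ref{propgendoFrobcomprop}, observing (in the remark immediately following) that for a gendo-symmetric algebra one may choose $\sigma$ to be the identity automorphism, so that $\sigma^{-1}(x)$ becomes $x$ in the description of $\text{Im}(\Delta)$. Your closing verification --- that $eAe$ is symmetric, hence $\nu_{eAe}$ may be taken to be the identity, and that this normalisation is compatible with the data $\tau$ and $\gamma$ of Lemma \ref{lemmagamma} --- is exactly the compatibility the paper leaves implicit, so it is a welcome elaboration rather than a deviation.
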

\begin{remark} If we assume that the finite dimensional algebra $A$ is gendo-symmetric, we can choose $\sigma$ as identity automorphism. Therefore, the comultiplication given in this article and the comultiplication given by Fang and Koenig in \cite{Koenig1} are equal for gendo-symmetric algebras.
\end{remark}
The last two results show that the comultiplication $\Delta: A \rightarrow A\otimes_k A$ constructed in this article is a common comultiplication for Frobenius algebras and gendo-symmetric algebras.\\

\textbf{Acknowledgements.} The author would like to thank Steffen Koenig for helpful comments and proof reading. The results in this article are a part of author's doctoral thesis \cite{Yirtici}, which was financially supported by DFG.


\begin{thebibliography}{9}
\addcontentsline{toc}{section}{\refname}
\bibitem{Abrams} Abrams, L., Modules, Comodules and Cotensor Products over Frobenius Algebras. J. Algebra 219 (1999), 201-213.
\bibitem{Koenig1} Fang, M. and Koenig, S., Gendo-symmetric Algebras, Canonical Comultiplication, Bar Cocomplex and Dominant Dimension. Trans. Amer. Math. Soc. 368 (2016), no. 7, 5037-5055.
\bibitem{Koenig2} Fang, M. and Koenig, S., Endomorphism Algebras of Generators over Symmetric Algebras. J. Algebra 332 (2011), 428-433.
\bibitem{Kerner} Kerner, O. and Yamagata, K., Morita Algebras. J. Algebra 382 (2013), 185-202.
\bibitem{Kock} Kock, J., Frobenius Algebras and 2D Topological Quantum Field Theories, London Mathematical Society Student Texts, 59. Cambridge University Press, 2003.
\bibitem{Morita} Morita, K., Duality for modules and its applications to the theory of rings with minimum condition, Sci. Rep. Tokyo Kyoiku Daigaku, Sect. A 6 (1958) 83–142.
\bibitem{Yamagata} Skowroński, A., Yamagata, K., Frobenius Algebras I, EMS Textbk. Math., European Mathematical Society, Zürich, 2011.
\bibitem{Yamagata2} Yamagata, K., Frobenius Algebras, in: Handbook of Algebra I, Elsevier, North-Holland, Amsterdam, 1996, pp. 841-887.
\bibitem{Yirtici} Y{\i}rt{\i}c{\i}, \c{C}., Gendo-Frobenius algebras and comultiplication, Doctoral thesis, University of Stuttgart, 2020.
\end{thebibliography}
\end{document}